\documentclass[10pt]{amsart}
\usepackage{graphicx, amsmath, amssymb, amsthm, amsfonts, mathtools, tikz-cd, comment, tikz, caption, subcaption, url, tabularx, enumitem, mathrsfs, amsrefs, appendix, float}

\newtheorem{introthm}{Theorem}
\newtheorem{introcor}[introthm]{Corollary}
\newtheorem{introconj}[introthm]{Conjecture}

\newtheorem{thm}{Theorem}[section]
\newtheorem{lem}[thm]{Lemma}
\newtheorem{prop}[thm]{Proposition}
\newtheorem{cor}[thm]{Corollary}

\theoremstyle{definition}
\newtheorem{defn}[thm]{Definition}

\newtheorem{ex}[thm]{Example}

\newtheorem{rmk}[thm]{Remark}

\newcommand{\dotminus}{\mathbin{\ooalign{\hss\raise1ex\hbox{.}\hss\cr \mathsurround=0pt$-$}}}
\newcommand{\U}{\mathscr{U}}

\title{The Selfless Dichotomy}
\author{Miles Gould}

\address{Department of Mathematics,
University of Louisiana at Lafayette,
Lafayette, USA}

\email{miles.gould1@louisiana.edu}

\date{\today}

\begin{document}

\begin{abstract}
    The purpose of this note is to address the gap in the stable rank one/purely infinite dichotomy of selfless $C^*$-probability spaces. In particular, we show that nonfaithful selfless $C^*$-probability spaces are purely infinite, simple. This completes the dichotomy: Every selfless $C^*$-algebra either has stable rank one or is purely infinite. Notably, this shows that every selfless $C^*$-algebra is pure. To accomplish this, we show that infinite reduced free products of $C^*$-probability spaces with nonfaithful states inducing faithful GNS representations are often purely infinite, simple. Having resolved the dichotomy, we improve existing permanence properties of selfless $C^*$-probability spaces, make progress on a conjecture of Choda and Dykema, and produce several new isomorphisms arising from reduced free products.
\end{abstract}

\maketitle

\section{Introduction}

A unital $C^*$-algebra $A$ has stable rank one if the identity in the matrix algebra $M_n(A)$ is not equivalent to a subprojection, and $A$ admits cancellation of projections. On the other hand, $A$ is called purely infinite if every positive element is paradoxically large, absorbing a copy of itself. We can think of these as the two extreme ends of finite and infinite. Although there are many simple $C^*$-algebras which neither have stable rank one nor are purely infinite, many important classes of simple $C^*$-algebras are expected, or known, to fit into this dichotomy.

Motivated by the breakthroughs in \cite{AMR} and introduced by Robert in 2025 \cite{ROB}, selfless $C^*$-algebras have captured the attention of many researchers \cites{FLO,GAO,HAY,HAY2,OZA,RAU,VIG}. A $C^*$-probability space $(A,\varphi)$ is selfless if there exists a copy of $(A,\varphi)$ inside an ultrapower $(A,\varphi)^\U$ which is freely independent from the diagonal copy. Selflessness provides a testable mechanism through which a $C^*$-algebra has strict comparison, a property in which positive elements are ordered by their state-dimensions.

Selflessness also gives rise to the stable rank one/purely infinite dichotomy. For a selfless $C^*$-probability space $(A,\varphi)$ with tracial $\varphi$, $A$ is monotracial and has stable rank one. If $\varphi$ is nontracial yet faithful, $A$ is purely infinite. The remaining case was that of selfless $(A,\varphi)$ with nonfaithful $\varphi$. We resolve this case.

\begin{introthm}[Theorem \ref{pureinf2}]\label{pureinf}
    Let $(A,\varphi)$ be a selfless $C^*$-probability space with $\varphi$ nonfaithful. Then $A$ is purely infinite, simple.
\end{introthm}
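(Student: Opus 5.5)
The plan is to reduce the statement to a result about infinite reduced free products, as the abstract suggests. I will use the standard facts from Robert's work \cite{ROB}: a selfless $C^*$-probability space $(A,\varphi)$ has $A$ simple, and the GNS representation of $\varphi$ is faithful. Simplicity follows because the kernel of $\pi_\varphi$ is a proper ideal. Selflessness then lets us embed, via iterated ultrapowers, the infinite reduced free product $(A,\varphi)^{*\infty}$ into $(A,\varphi)^\U$ compatibly with the diagonal copy of $A$. More precisely, iterating the selfless embedding gives a state-preserving embedding
\[
\iota\colon (B,\psi):=\mathop{\ast}_{n\in\mathbb N}(A,\varphi) \hookrightarrow (A,\varphi)^\U,
\]
where the first free copy is the diagonal $A$. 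Since the states are GNS-faithful, the reduced free product is well defined, and the inclusion is injective on the reduced algebra.

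Next I would show that $B$ is purely infinite and simple. Because $\varphi$ is nonfaithful, there is a positive $a\neq 0$ with $\varphi(a)=0$. In the free product, take copies $a_1,a_2,\dots$ of $a$ in distinct free factors. Each $a_i$ is a $\psi$-null positive element. Using the GNS-faithfulness, $\varphi(a x a)$ is not identically zero on $A$, so in the free Fock-space picture the $a_i$ act as ``creation-like'' operators. The key computation is that for suitable normalised elements $v_i$ built from $a_i$ and elements of another free copy, $v_i^*v_j$ is approximately $\delta_{ij}$ times a fixed positive element. This yields orthogonal isometry-like behaviour, that is, a properly infinite projection or element in every hereditary subalgebra. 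I would then combine this with a Dykema-type argument: in an infinite free product, every nonzero positive $b$ can be conjugated, after cutting down by spectral projections and using freeness in fresh factors, to dominate such a properly infinite element. This gives pure infiniteness of $B$ and simplicity via Avitzour/Dykema-style criteria. I expect this to be the main obstacle. Free products with nonfaithful states behave badly, so the estimates must be done directly on the free product Hilbert space, controlling the null vectors $\hat a$ with $\varphi(a^*a)=0$ in $L^2$. In fact $a^{1/2}$ gives a zero vector, so one must work instead with $x$ such that $\varphi(x^*x)=0$ but $xA\neq 0$. I would first try to realise Cuntz-like isometries as limits of $\sum_i a_i^{1/2}(\cdot)$.

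Finally, I would transfer the result back to $A$. Take any nonzero positive $b\in A$ and $\epsilon>0$. Pure infiniteness of $B$, applied to $\iota(b)$ viewed in the first copy, gives $x\in B$ with $x^*\,\iota(b)\,x\approx 1$ and $x^*x\approx$ a properly infinite pattern. Representing $\iota(x)$ in the ultrapower by a sequence in $A$ and choosing a good index, one gets $y\in A$ with $\|y^*by-1\|<\epsilon$. Thus every nonzero hereditary subalgebra of $A$ contains a full properly infinite element, and in fact $1$ is Cuntz-below every $b$. The same works for orthogonal pairs, showing $1\oplus1\precsim 1$. Together with the simplicity of $A$, this gives that $A$ is purely infinite and simple.
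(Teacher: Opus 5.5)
Your outer reduction is the paper's. You embed $B=\ast_{n\in\mathbb N}(A,\varphi)$ into $(A,\varphi)^{\U}$ over the diagonal, which is equivalent to existentiality of the first-factor embedding $A\hookrightarrow B$, and then pull $x^*\iota(b)x\approx 1$ back to $A$. The gap is the middle step, that $B$ is purely infinite and simple; this is the whole content of Lemma~\ref{tail} and Theorem~\ref{freedich2}. You flag it as the main obstacle and offer only speculative mechanisms. In particular, the Avitzour/Dykema simplicity criteria you invoke need unitaries in the centralizer or kernel of the state, which an arbitrary nonfaithful selfless state need not provide.

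The missing idea is elementary and needs no isometries or spectral projections. Let $a_j$ be a positive contraction in the kernel of the $j$-th copy of $\varphi$. It annihilates $\xi$ (as $\varphi(a_j^2)\le\varphi(a_j)=0$) and every word not beginning in the $j$-th factor. Being self-adjoint, its range therefore lies in the words beginning in the $j$-th factor. Hence $a_ia_j=0$ for $i\neq j$. Moreover $a_Fda_F=\omega(d)a_F^2$ whenever $d$ lies in the free product of finitely many factors disjoint from $F$, because centred reduced words in those factors carry words beginning in $F$ to words beginning outside $F$. Apply this to a finitely supported approximant $d$ of $c=v^*bv$, where $\omega(c)>0$ by GNS-faithfulness. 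This gives $a_F\precsim b$, i.e.\ $m[a_1]\le[b]$ for every $m$ and every nonzero $b\in B_+$. So $\mathrm{Ideal}_B(a_1)$ is the minimal essential ideal of $B$, and it is purely infinite simple.

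That still does not give $1\precsim\iota(b)$, which is what your transfer step uses: you must also get the unit into this ideal. The paper does this with a Hilbert space estimate. If $E_j$ is the projection onto words not beginning in the $j$-th factor, then $\sum_{j\in F}\|E_j\eta\|^2\ge|F|-1$ for unit vectors $\eta$. This forces $\sum_{j\in F}e_j$ to be invertible whenever the $e_j$ are positive contractions in $\mathrm{Ideal}_A(\ker(\varphi)_+)$, placed in distinct copies, with $\sum_{j\in F}\varphi(e_j)>1$. Such $e_j$ exist because $\delta=\|\varphi|_{\mathrm{Ideal}_A(\ker(\varphi)_+)}\|>0$ and there are infinitely many copies.

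Alternatively, take simplicity of $A$ from Robert, as you propose. Your stated reason for it is a non sequitur, though: the kernel of $\pi_\varphi$ being a proper ideal says nothing about simplicity. Simplicity of $A$ gives $[1]\le n[a_1]$ for some $n$, hence $[1]\le n[a_1]\le[\iota(b)]$ in $B$, which suffices for your transfer. Your proposal contains neither the comparison identity nor either route to the unit, so the argument is incomplete precisely at its central step.
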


This completes Robert's original dichotomy \cite[3.1]{ROB}.

\begin{introthm}\label{selfdich}
    Let $(A,\varphi)$ be a selfless $C^*$-probability space. Then $A$ is simple.
    \begin{enumerate}[label=(\roman*)]
        \item If $\varphi$ is tracial, then $A$ has stable rank one, strict comparison of positive elements by $\varphi$, and $\varphi$ is the unique (2-quasi)tracial state on $A$.
        \item If $\varphi$ is nontracial, then $A$ is purely infinite.
    \end{enumerate}
\end{introthm}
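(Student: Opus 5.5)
The plan is to assemble Theorem \ref{selfdich} from Robert's original dichotomy \cite[3.1]{ROB} together with Theorem \ref{pureinf}. The only new ingredient is Theorem \ref{pureinf}, which disposes of the nonfaithful case. I would split according to whether $\varphi$ is tracial and, when it is not, whether it is faithful.

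\textbf{Simplicity.} This comes first. When $\varphi$ is faithful, simplicity is part of Robert's results \cite{ROB}. The argument is that selflessness embeds $A$ freely against a copy of itself inside $A^\U$, and a standard Avitzour/Dykema-type averaging argument shows every nonzero ideal contains the unit. When $\varphi$ is nonfaithful, simplicity is exactly the content of Theorem \ref{pureinf}. So simplicity holds in all cases.

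\textbf{Case (i): $\varphi$ tracial.} I would first argue that a tracial $\varphi$ is automatically faithful. The trace-kernel $N_\varphi=\{a:\varphi(a^*a)=0\}$ is a closed two-sided ideal when $\varphi$ is a trace. Since $A$ is simple and $\varphi(1)=1$, this ideal is proper, hence zero. (To avoid circularity in the nonfaithful simplicity argument, note that Theorem \ref{pureinf} concludes pure infiniteness, which no unital algebra with a tracial state can have, since $1$ would be properly infinite. So the nonfaithful case is necessarily nontracial.) With $\varphi$ faithful and tracial, \cite[3.1]{ROB} gives stable rank one, strict comparison by $\varphi$, and uniqueness of $\varphi$ among 2-quasitraces.

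\textbf{Case (ii): $\varphi$ nontracial.} If $\varphi$ is faithful, pure infiniteness is Robert's result \cite[3.1]{ROB}. If $\varphi$ is nonfaithful, it is Theorem \ref{pureinf}.

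Since everything reduces to previously established statements, there is no real obstacle here. The only point requiring care is the observation that a tracial state on a selfless algebra cannot be nonfaithful. I would justify this via the pure infiniteness conclusion of Theorem \ref{pureinf} as above, which avoids any circular use of simplicity.
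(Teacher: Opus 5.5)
Your proposal is correct and takes essentially the paper's approach. The paper gives no separate proof beyond noting that Theorem \ref{pureinf} fills the nonfaithful gap in Robert's dichotomy \cite[3.1]{ROB}, which is exactly your case split. Your extra step that a tracial $\varphi$ must be faithful is valid, but it needs neither simplicity nor Theorem \ref{pureinf}: for a trace, $\{a:\varphi(a^*a)=0\}$ is a two-sided ideal annihilated by $\pi_\varphi$, so the GNS-faithfulness built into the definition of selflessness already forces $\varphi$ to be faithful.
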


The full dichotomy justifies the ubiquitous terminology selfless \textit{$C^*$-algebra} rather than selfless \textit{$C^*$-probability space}, as the state involved is either the unique trace or any state makes the $C^*$-algebra selfless, per Ozawa \cite[3]{OZA}, so selflessness is really a property of the algebra. Concretely, we call a $C^*$-algebra $A$ \textit{selfless} if there exists a state $\varphi$ such that $(A,\varphi)$ is selfless. As pointed out to the author by Hannes Thiel, selfless $C^*$-algebras relative to faithful states are pure via this dichotomy \cite{THI}. As an immediate corollary of theorem \ref{selfdich}, we remove the faithful assumption.

\begin{introcor}\label{pure}
    Every selfless $C^*$-algebra is pure.
\end{introcor}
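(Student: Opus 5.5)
We derive Corollary \ref{pure} from the dichotomy of Theorem \ref{selfdich} by treating its two cases separately. Let $A$ be selfless and fix a state $\varphi$ with $(A,\varphi)$ selfless. By Theorem \ref{selfdich}, $A$ is simple. Recall that a simple $C^*$-algebra is pure when its Cuntz semigroup is almost unperforated and almost divisible. Equivalently, by the work of Winter and of Antoine--Perera--Thiel, this holds when $A$ has strict comparison of positive elements and almost divisibility.

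\emph{Case $\varphi$ nontracial.} Part (ii) of Theorem \ref{selfdich}, which uses Theorem \ref{pureinf} when $\varphi$ is nonfaithful, shows that $A$ is purely infinite and simple. In that case every nonzero element of the Cuntz semigroup $\mathrm{Cu}(A)$ is properly infinite. Hence $\mathrm{Cu}(A)\cong\{0,\infty\}$, which is trivially almost unperforated and almost divisible. So $A$ is pure.

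\emph{Case $\varphi$ tracial.} Part (i) of Theorem \ref{selfdich} gives stable rank one, strict comparison by $\varphi$, and uniqueness of the 2-quasitrace. Strict comparison against all 2-quasitraces, here only $\varphi$, yields almost unperforation of $\mathrm{Cu}(A)$. It remains to show almost divisibility.

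\emph{Almost divisibility (main obstacle).} We would obtain it from selflessness directly. Take a copy $B$ of $(A,\varphi)$ in $(A,\varphi)^\U$ that is freely independent from the diagonal copy of $A$. Inside $B$ we cannot assume projections exist, so instead we use semicircular or Haar-unitary elements of the free product $C^*(A,B)$. These allow us to cut a positive element $a\in A$ into $n$ pieces of roughly equal $\varphi$-dimension. Strict comparison, together with stable rank one via Thiel's computation of the Cuntz semigroup of simple stable rank one algebras, then lets us realize the rank function $\widehat{[a]}/n$ up to $\epsilon$. Finally we reindex to pull the construction back from the ultrapower into $A$.

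Alternatively, and more economically, one can cite the already-known purity of selfless algebras relative to faithful states \cite{THI}. Selfless tracial $(A,\varphi)$ with $A$ simple automatically has $\varphi$ faithful, because the kernel of $\varphi$'s GNS representation is a proper ideal and therefore zero. So the tracial case reduces verbatim to \cite{THI}, and the only genuinely new input is the purely infinite nonfaithful case supplied by Theorem \ref{pureinf}.
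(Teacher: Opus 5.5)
Your economical route is correct and is essentially the paper's argument. The paper gets the faithful case from \cite{THI} and the nonfaithful case from the purely infinite simplicity of Theorem \ref{pureinf}, i.e.\ $\mathrm{Cu}(A)=\{0,\infty\}$; your tracial/nontracial split covers the same cases, since a tracial GNS-faithful state is faithful. Two small points: the reason for that faithfulness is that for a trace the set $\{a:\varphi(a^*a)=0\}$ is a two-sided ideal equal to the GNS kernel, hence zero, not merely that the GNS kernel is a proper ideal; and your sketched direct proof of almost divisibility is not needed (as written it is not a proof), so you should rely on the citation.
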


Theorem \ref{selfdich} also provides several checkable sufficient conditions ensuring a reduced free product is purely infinite, simple. Consider the following conjecture of Choda and Dykema \cite{CHODYK}.

\begin{introconj}[Choda-Dykema]
    Let $(A,\varphi)$,$(B,\psi)$ be nontrivial inducing faithful GNS. If $(A,\varphi)*(B,\psi)$ is simple and $\varphi$ or $\psi$ is nontracial, then $(A,\varphi)*(B,\psi)$ is purely infinite, simple.
\end{introconj}

Theorem \ref{selfdich} provides a positive answer when simplicity is strengthened to selflessness. In fact, we only need one factor to be selfless.

\begin{introcor}\label{selfpure}
    Let $(A,\varphi)$ be selfless, $(B,\psi)$ nontrivial inducing faithful GNS. If $\varphi$ or $\psi$ is nontracial, then $(A,\varphi)*(B,\psi)$ is purely infinite, simple. In particular, if $A$ is purely infinite, simple, so is $(A,\varphi)*(B,\psi)$.
\end{introcor}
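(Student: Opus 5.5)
The plan is to show that $(A,\varphi)*(B,\psi)$ is itself selfless with respect to the free product state $\varphi*\psi$, and then apply Theorem \ref{selfdich}. Write $(C,\theta)=(A,\varphi)*(B,\psi)$. One subtlety is that $\varphi$ may be nonfaithful, so the reduced free product should be formed via GNS. Since $A$ is simple (Theorem \ref{selfdich}) and $\varphi\neq 0$, the GNS representation of $\varphi$ is faithful, so both factors induce faithful GNS representations and the reduced free product is well behaved. Selflessness of a reduced free product with a selfless factor is a permanence property I expect to use from the existing literature (Robert's or Ozawa's permanence results). If it is not available in the needed generality, I would prove it directly as follows.

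Let $\iota:(A,\varphi)\to(A,\varphi)^\U$ be an embedding whose image is free from the diagonal copy of $A$. I want an embedding $(C,\theta)\to(C,\theta)^\U$ that is free from the diagonal copy of $C$.

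1. Use the standard reformulation of selflessness: $(A,\varphi)$ is selfless if and only if the first-factor inclusion $(A,\varphi)\to(A,\varphi)*(C_r^*(\mathbb{F}_\infty),\tau)$ (or $*(A,\varphi)$) embeds into $(A,\varphi)^\U$ in a state-preserving way that restricts to the diagonal on $A$.
2. Apply this inside $C$. The subalgebra $A\subset C$ is selfless, so $A*A$ embeds into $A^\U\subset C^\U$ over $A$.
3. Using associativity and commutativity of reduced free products, build from the selfless structure an embedding of $C*C=(A*B)*(A*B)$ into $C^\U$ that restricts to the diagonal on the first copy of $C$.

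The key step is constructing the second, free copy of $B$. I would use free-absorption: since $A$ carries a free copy of itself in the ultrapower, it also carries free Haar unitaries $u$ in $A^\U$ that are free from $C$ (this uses freeness of $A^\U$-independent copies over amalgamation, roughly). Conjugating the diagonal $B$ by such a $u$ gives $uBu^*$, which is free from $C$ with the correct state. Combined with the free copy $\iota(A)$, this should yield a copy of $C$ free from the diagonal $C$. The main obstacle is making this freeness rigorous: one must check that $\iota(A)$, $u$, and $C$ are jointly free in $C^\U$, which requires upgrading freeness from $A$ to freeness from $A*B$. I would attempt this by iterating selflessness (taking an ultrapower of an ultrapower and then reindexing) to get $\iota(A)$ free from $A$ and simultaneously free from $B$. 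A free product argument with moment computations on alternating words then finishes it, and nonfaithfulness is harmless because all moment computations take place at the level of states.

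Once $C$ is selfless, Theorem \ref{selfdich} makes it simple. Moreover $\theta$ is nontracial whenever $\varphi$ or $\psi$ is, because $\theta$ restricts to $\varphi$ on $A$ and to $\psi$ on $B$. Part (ii) of Theorem \ref{selfdich} then gives that $C$ is purely infinite, simple. For the final sentence of the statement: if $A$ is purely infinite simple, then $\varphi$ cannot be tracial, since by Theorem \ref{selfdich}(i) a tracial $\varphi$ would give $A$ stable rank one, which is incompatible with being infinite. So the nontraciality hypothesis holds and the first part applies.
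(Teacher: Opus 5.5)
Your top-level route is the paper's: show $(A,\varphi)*(B,\psi)$ is selfless, then apply Theorem \ref{selfdich}, using that the free product state restricts to $\varphi$ and $\psi$. Your argument for the final sentence (a tracial $\varphi$ would force stable rank one, hence finiteness) is also fine.

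The gap is in the one step that carries content. The permanence result you would cite, \cite[4.2]{ROB}, is proved only for separable $B$. The statement puts no size restriction on $B$, and this is exactly why the paper defers the corollary until after Proposition \ref{selfabs}.

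The missing idea is a L\"owenheim--Skolem reduction. Write $(B,\psi)$ as the directed union of its separable elementary submodels $(B_\lambda,\psi_\lambda)$, where $\psi_\lambda=\psi|_{B_\lambda}$. GNS-faithfulness descends to each $\psi_\lambda$. By elementarity, $\|\pi_{\psi_\lambda}(b)\|\ge\sup_{y\in(B_\lambda)_1}\|by\|_\psi=\sup_{y\in B_1}\|by\|_\psi$. So $\pi_{\psi_\lambda}(b)=0$ forces $\pi_\psi(b)=0$, and hence $b=0$. Then each $(A,\varphi)*(B_\lambda,\psi_\lambda)$ is selfless by \cite[4.2]{ROB}. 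These algebras form an inductive system with limit $(A,\varphi)*(B,\psi)$, and selflessness passes to inductive limits by \cite[4.1]{ROB}. Arbitrary separable subalgebras of $B$ would not suffice here, because $\psi$ restricted to them need not be GNS-faithful.

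Your fallback direct argument does not fill this gap. Freeness of $\iota(A)$ from all of $C$ at the level of states is actually the easy part. The ultrapower $A^\U$ is free from the diagonal $B$ in $C^\U$ (use centered representatives), so $\iota(A)$, $A$, $B$ already form a free family. Two things are genuinely missing:
\begin{enumerate}
\item You need an actual construction of a Haar unitary $u\in A^\U$ free from $A$ and $\iota(A)$. Selflessness hands you a free copy of $A$, not a Haar unitary, so you assert this rather than derive it.
\item You need control of norms. Selflessness requires an injective, hence isometric, state-preserving embedding of the \emph{reduced} free product $C*C$ into $C^\U$. Free moment computations only identify the GNS image of the $C^*$-algebra generated by $C$, $\iota(A)$ and $uBu^*$ inside $C^\U$. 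The ultrapower state need not be GNS-faithful on such a subalgebra; it is not even faithful on $C(\mathbb{T})^\U$ with Haar measure.
\end{enumerate}
So your remark that nonfaithfulness is harmless because everything happens at the level of states is precisely the point that fails. This is why selflessness is phrased through existential embeddings rather than moment computations.
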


To prove theorem \ref{pureinf}, we analyze infinite reduced free products of nonfaithful yet GNS-faithful states and obtain the following result.

\begin{introthm}[Theorem \ref{freedich2}]\label{freedich}
    Let $J$ be infinite. For each $j\in J$, let $(A_j,\varphi_j)$ be a $C^*$-probability space with nonfaithful $\varphi_j$ inducing faithful GNS. Define 
    \[I_j:=\mathrm{Ideal}_{A_j}(\ker(\varphi_j)_+)\text{ and }\delta_j:=\|\varphi_j|_{I_j}\|.\] 
    \begin{enumerate}[label=(\roman*)]
        \item Then $\ast_{j\in J}(A_j,\varphi_j)$ contains an essential and purely infinite, simple ideal.
        \item If $\sum_{j\in J}\delta_j>1$, then $\ast_{j\in J}(A_j,\varphi_j)$ is purely infinite, simple.
    \end{enumerate}
\end{introthm}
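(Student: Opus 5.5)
Let $(A,\varphi)=\ast_{j\in J}(A_j,\varphi_j)$ with canonical copies $A_j\subseteq A$. I would rely on three standard facts.
\begin{itemize}
\item Dykema's results on reduced free products: since each $\varphi_j$ induces a faithful GNS representation, so does $\varphi$ on $A$.
\item The kernel of the GNS-faithful state carries the infinite behaviour.
\item Each $I_j$ is a nonzero ideal of $A_j$, because $\varphi_j$ is nonfaithful. Moreover $\varphi_j|_{I_j}$ is nonzero, since GNS-faithfulness forbids $\varphi_j$ from vanishing on a nonzero ideal.
\end{itemize}

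The plan for (i) is to take $I:=\mathrm{Ideal}_A\bigl(\bigcup_j I_j\bigr)$ and show that it is simple, essential and purely infinite.

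\emph{Essentiality.} I would use that $A$ acts faithfully on the free product Hilbert space $(H,\xi)$. Let $x\in A$ with $xI=0$. Because $\ker\varphi_j\cap I_j\neq 0$, we can pick positive elements $a_j\in I_j$ with $\varphi_j(a_j)=0$. These push the vacuum $\xi$ into $\mathring H_j$, and we also have $\varphi_j(a_j)$-type vectors. Using infinitely many distinct indices, I would show that $I\xi$ together with words starting in fresh letters spans a dense subspace. Then $x=0$ follows from faithfulness.

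\emph{Pure infiniteness of $I$.} The key local computation uses a positive $a\in I_j$ with $\varphi_j(a)=0$ and $\|a\|=1$. Taking $b\in I_k$ of the same kind for $k\neq j$, freeness together with $\varphi(a)=\varphi(b)=0$ lets one build partial isometries that make $a$ properly infinite. Concretely, I would approximate elements of the form $a\,x\,b$ and use that $\varphi(y^*a y)=0$ kills the trace part. The infinite index set lets us repeat this with fresh indices, so that any positive element $c\in I$ dominates, after cutting down, a projection-like element equivalent to $1\oplus1$ inside a hereditary subalgebra.

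\emph{Simplicity and the approximation step.} For simplicity and pure infiniteness simultaneously, I would prove that for every nonzero positive $c\in I$ and every $\epsilon>0$ there is $z\in I$ with $z^*cz$ close to a fixed properly infinite $a$. The argument would adapt Dykema's and Avitzour's free-product averaging to a state that is zero on $a$, again using infinitely many fresh free letters: conjugating by words in new indices averages $c$ toward $\varphi(c)+(\text{kernel part})$. This is the step I expect to be hardest, because $\varphi$ is nonfaithful and $c$ may satisfy $\varphi(c)=0$. In that case I would first multiply $c$ by elements of $I_k$ to move it into a position where the vacuum component is nonzero, which is possible by GNS-faithfulness.

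For (ii) it suffices to show $I=A$, i.e.\ $1\in I$. Since $I$ is simple and essential, the quotient $A/I$ has trivial intersection with the purely infinite part. I would use a free Cuntz-type construction: for each $j$, pick positive $e_j\in I_j$ with $\varphi_j(e_j)$ close to $\delta_j$ and $\|e_j\|\le 1$. If $\sum\delta_j>1$, then finitely many indices already give $\sum\varphi(e_j)>1$. By freeness the element $\sum_j e_j$ then has spectrum reaching a value $>1$ in the appropriate amalgamated sense, which lets us invert an element of $I$ in $A$. Precisely, I would compute via free compression (the $R$-transform/free additive convolution of Bernoulli-like distributions with masses $\varphi(e_j)$) that $\sum e_j$ is invertible when the total mass exceeds $1$, in the same way that free projections of traces $t_j$ with $\sum t_j>1$ satisfy $\bigvee p_j=1$. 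This yields $1\in I$, so $A=I$ is purely infinite, simple.
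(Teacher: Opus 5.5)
Your proposal has genuine gaps in both parts.

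\textbf{Part (i).} The mechanism that makes the argument work is missing, and your essentiality sketch starts from a false claim. A positive $a_j\in A_j$ with $\varphi_j(a_j)=0$ does not push $\xi$ into $\overset{\circ}{H}_j$; it annihilates it, since $\|a_j^{1/2}\xi\|^2=\varphi_j(a_j)=0$. Under $H\cong H_j\otimes H(j)$, this means $a_j$ kills $\xi$ and every word not beginning with a letter from $A_j$, and its range lies in the words beginning with $A_j$. This exact structure replaces any averaging. It gives $a_ia_j=0$ for $i\neq j$. It also gives $a_F\,d\,a_F=\omega(d)a_F^2$, where $a_F=\sum_{j\in F}a_j$ and $d$ is any element of the free product of finitely many factors indexed by a set $G$ disjoint from $F$: centered words from $G$ send words beginning in $F$ to words beginning in $G$, which $a_F$ kills. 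Now take a nonzero $b\geq 0$. GNS-faithfulness gives $v$ with $t=\omega(v^*bv)>0$. Approximate $v^*bv$ within $t/3$ by such a $d$, and compress by $a_F$ with $|F|=m$ fresh indices. This yields $a_Fv^*bva_F\geq\frac{t}{3}a_F^2$, i.e.\ $m[a_j]\leq[b]$. The same trick with a single letter $v\in A_j$ gives $[a_i]=[a_j]$. Applied to $b$ in an arbitrary nonzero ideal, this gives minimality, hence essentiality and simplicity, of $\mathrm{Ideal}(a_j)$. Applied to $b=a_j$, together with $[b]\leq\infty[a_j]$ for $b$ in the ideal, it gives pure infiniteness. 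Your ``partial isometries'' and ``averaging by words in fresh indices'' are never made concrete. The standard Avitzour/Dykema averaging arguments need unitaries in the centralizers or kernels of the factors, and nothing of the sort is assumed here; you yourself leave this as the unresolved hard step.

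\textbf{Part (ii).} The free-convolution computation cannot give invertibility. The $R$-transform only computes the $\omega$-distribution of $c=\sum_{j\in F}e_j$, that is, the spectral measure of $c$ at $\xi$, which only sees $\overline{C^*(c)\xi}$. Since $\omega$ is nonfaithful, this measure does not determine the spectrum of $c$. For instance, take $A_j=M_2$, $\varphi_j=\langle\,\cdot\,e_1,e_1\rangle$ and $e=e_{11}\in I_j=M_2$. The distribution of $e$ is $\delta_1$, yet $e$ acts on $H$ as the projection onto $H(j)$ and is not invertible. So the analogy with free projections in a faithful tracial setting fails exactly at nonfaithfulness. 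In any case the $e_j$ are arbitrary positive contractions, so their distributions are not Bernoulli. What you need is $\langle c\eta,\eta\rangle\geq\epsilon$ for \emph{all} unit vectors $\eta\in H$. The paper gets this by a direct Hilbert-space argument. Let $E_j$ be the projection onto $H(j)$. Approximate null vectors $\eta_n$ of $c$ satisfy $\limsup_n\|E_j\eta_n\|^2\leq\|(1-e_j)\xi_j\|^2\leq 1-\varphi_j(e_j)$. On the other hand, every unit vector satisfies $\sum_{j\in F}\|E_j\eta\|^2\geq|F|-1$, because the subspaces of words beginning in the different $A_j$, $j\in F$, are mutually orthogonal. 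Summing the first bound over $F$ and comparing with the second contradicts $\sum_{j\in F}\varphi_j(e_j)>1$.
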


Remarkably, the conclusion of theorem \ref{freedich} does not depend on some interplay between the factors. It gives a sufficient condition, which is computable at the level of the individual factors, and $\delta_j>0$ in general, so this condition is rather weak. It also makes no demand of unitaries in the kernels/centralizers of the factors, distinguishing it from several prior results in this direction \cites{CHODYK,DYK2,DYK1}.

\subsection*{Acknowledgements}

Part of this work was completed during the Logic and $C^*$-algebras conference (IlijasFest) in Cetraro, Italy as well as the Canadian Operator Symposium 2026 at Carleton University. I would like to thank the organizers for their effort and hospitality. I also thank Ilan Hirshberg, David Jekel, Hannes Thiel, and Stuart White for valuable conversations throughout. I am grateful to Leonel Robert for several crucial ideas which greatly improved this note. As always, I am indebted to Ilijas Farah for suggesting this remarkable model-theoretic problem, and for the opportunities he afforded me during the writing of this note.

\section{Main Results}

First, let us quickly define the objects of our study.

\begin{defn}\label{self}
    A \textit{$C^*$-probability space} is a pair $(A,\varphi)$, where $A$ is a unital $C^*$-algebra and $\varphi$ is a state on $A$. The language of $C^*$-probability spaces consists of sorts $A_n$ (the $n$-balls), functions encoding addition, multiplication, scalar multiplication, and involution, as well as two predicates: $\|\cdot\|$ and $\varphi(\cdot)$. For more exposition, see \cite{FAR},\cite{ROB}.

    We call an embedding $\theta:(B,\psi)\hookrightarrow(A,\varphi)$ \textit{existential} if for every quantifier-free formula $\Phi(\bar{x},\bar{y})$ over tuples $\bar{x},\bar{y}$ in the unit ball, \[\inf_{\bar{y}\in B_1}\Phi(\bar{a},\bar{y})^{(B,\psi)}=\inf_{\bar{y}\in A_1}\Phi(\theta(\bar{a}),\bar{y})^{(A,\varphi)}.\] It is worth mentioning that this is equivalent to the condition that there exists an embedding $\sigma:(A,\varphi)\hookrightarrow(B,\psi)^\U$ for some ultrafilter $\U$ such that $\sigma\theta$ is the diagonal embedding $\iota:B\hookrightarrow B^\U$, though we never use this characterization.
    
    A $C^*$-probability space $(A,\varphi)$ is called \textit{selfless} if $A\neq\mathbb{C}$, $\varphi$ induces faithful GNS, and the first factor embedding $(A,\varphi)\hookrightarrow (A,\varphi)*(A,\varphi)$ is existential.
\end{defn}

Throughout this section, we will prove our main theorems. To do so, we will make use of the Cuntz order. For $a,b\in A_+$, we write $a\precsim b$ if there exists $z_n\in A$ with $\|z_n^*bz_n-a\|\rightarrow 0$. The symmetrization of $\precsim$ is an equivalence relation, and we denote by $[a]$ the class of $a\in A_+$.

\begin{defn}\label{index}
    Let $J$ be infinite. For each $j\in J$, let $(A_j,\varphi_j)$ be a $C^*$-probability space with nonfaithful $\varphi_j$ inducing a faithful GNS representation. Define $(C,\omega):=\ast_{j\in J} (A_j,\varphi_j)$. As $\varphi_j$ is nonfaithful, fix $a_j\in (A_j)_+$ with $\|a_j\|=1$, $\varphi_j(a_j)=0$. For $F\subseteq J$ finite, define $a_F:=\sum_{j\in F}a_j\in C$ after identifying $A_j\subseteq C$. Finally, let $I_j:=\mathrm{Ideal}_{A_j}(\mathrm{ker}(\varphi_j)_+)$ and $\delta_j:=\|\varphi_j|_{I_j}\|$.
\end{defn}

The following lemma is the main driver of theorem \ref{freedich}. We sacrifice the beauty of its statement to parse the steps involved.

\begin{lem}\label{tail}
    Let $J$, $(A_j,\varphi_j)$ be as in the preceding definition. 
    \begin{enumerate}[label=(\roman*)]
        \item For all $i,j\in J$, $a_i\precsim a_j$. That is, $\alpha:=[a_j]$ is independent of $j$.
        \item For all nonzero $b\in C_+$, $m\in\mathbb{N}$, $m\alpha\leq[b]$.
        \item $I:=\mathrm{Ideal}_C(\alpha)$ is the minimal essential ideal of $C$.
        \item $I$ is purely infinite, simple.
    \end{enumerate}
\end{lem}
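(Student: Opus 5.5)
The plan is to derive everything from one analytic estimate in the reduced free product: a tail index $k$ lets us compress any element of nonzero state onto a copy of $a_k$. Fix a nonzero $b\in C_+$. Since $\omega$ induces a faithful GNS representation (the free product of faithful GNS representations is faithful), there is $x\in C$ with $\omega(x^*bx)>0$. Set $c:=x^*bx$, so $c\precsim b$; by rescaling assume $\omega(c)=1$. Approximate $c$ in norm by an element $c'$ of the algebraic free product over a finite set $F_0\subseteq J$. Write $c'=1+c_0$ with $\omega(c_0)=0$, where $c_0$ is a sum of reduced words in $\bigcup_{j\in F_0}A_j$.

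The key observation is that $\varphi_k(a_k)=0$ with $a_k\ge 0$ forces $\varphi_k(a_k^2)\le\|a_k\|\varphi_k(a_k)=0$. Hence, by Cauchy--Schwarz, $a_k^{1/2}$ and $a_k$ annihilate the vacuum vector $\xi$, and $\varphi_k(ya_k)=\varphi_k(a_ky)=0$ for all $y\in A_k$. For $k\notin F_0$ we then want to show that $a_k c' a_k$ is close to $a_k^2$ modulo terms that can be made small. The term $a_kc_0a_k$ is a reduced word beginning and ending in $A_k$ whose outer letters kill $\xi$. It is not small by itself. Instead, I would average over many tail indices: for $F\subseteq J\setminus F_0$ with $|F|=n$, put $z:=n^{-1/2}\sum_{k\in F}a_k$. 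I would show by a Haagerup/Powers-type free averaging estimate that
\[\Bigl\|\tfrac1n\sum_{k,l\in F}a_k c_0 a_l\Bigr\|\longrightarrow 0\quad (n\to\infty),\]
using that the summands have pairwise free, vacuum-annihilating outer letters, so their ranges and supports are essentially orthogonal on the Fock-type decomposition of $L^2(C,\omega)$. Granting this, $z^*c'z\approx\tfrac1n a_F^2$ up to small error. The operator $\tfrac1n a_F^2$ in turn dominates, up to Cuntz equivalence, a single $a_k$ with $k\in F$ (for instance by compressing with a further free $a_k$, or from the fact that freely independent positive elements $a_k$ satisfy $a_F\sim \bigoplus$-type lower bounds). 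I expect this estimate to be the main obstacle, and I would attempt it first by a direct computation on the free product Hilbert space with $\xi$-killing letters.

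To get part (ii) with multiplicity $m$, split a large tail set into disjoint blocks $F_1,\dots,F_m$ and form the corresponding $z_1,\dots,z_m$. The same estimate shows that the cross terms $z_p^*cz_q$ with $p\ne q$ are small. Then $\bigoplus_{p}z_p^*cz_p\precsim c$, and hence $m\alpha\le[c]\le[b]$, provided each block yields some $a_k$ as above. Part (i) follows from the $m=1$ case applied inside $A_j$. Since $\varphi_j$ induces a faithful GNS representation and $a_j\neq0$, some $y\in A_j$ has $\varphi_j(y^*a_jy)>0$. Applying the argument with $b=a_j$ gives $a_k\precsim a_j$ for all large $k$, and symmetrically $a_j\precsim a_k$. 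Chaining through a common tail index gives $a_i\precsim a_j$ for all $i,j$.

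For (iii), $I=\mathrm{Ideal}_C(a_j)$ is nonzero, and by (ii) it is contained in every nonzero closed ideal (since $a_j\precsim b$ puts $a_j$ in the ideal generated by $b$). Hence $I$ is the minimal nonzero ideal. It is essential, because any ideal with $I\cap K=0$ must be $0$, since otherwise $K\supseteq I$. For (iv), any nonzero closed ideal of $I$ is an ideal of $C$, so it contains $I$; thus $I$ is simple. For pure infiniteness, take $0\neq b\in I_+$ and $\varepsilon>0$. Then $(b-\varepsilon)_+$ lies in the algebraic ideal generated by $a_j$, so $[(b-\varepsilon)_+]\le n\alpha$ for some $n$. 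By (ii), $2n\alpha\le[b]$, so $2[(b-\varepsilon)_+]\le[b]$ for every $\varepsilon$, i.e. $b\oplus b\precsim b$ in $I$ (comparisons in $C$ descend to the hereditary subalgebra $I$). By the Kirchberg--R{\o}rdam characterization, $I$ is purely infinite.
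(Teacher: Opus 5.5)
There is a genuine gap, and it sits at the step you flag as the main obstacle. The analytic estimate is never proved (``Granting this''), and the claim that motivates it is false. You say $a_kc_0a_k$ ``is not small by itself''; in fact it is exactly $0$.

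Since $\varphi_k(a_k^2)=0$, $a_k$ kills $\xi_k$. Under $H\cong H_k\otimes H(k)$, the operator $a_k=\pi_k(a_k)\otimes 1$ therefore annihilates $\xi$ and every word whose first letter is not from $A_k$. Its range lies in $\overset{\circ}{H}_k\otimes H(k)$, the words beginning in $A_k$. A centered reduced word in letters from factors in $F_0$, with $k,l\notin F_0$, acts on words beginning in $A_l$ purely by tensoring on the left. It produces words beginning with a letter from $F_0$, which $a_k$ then kills.

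Hence $a_kc_0a_l=0$ for all $k,l\notin F_0$, and likewise $a_ka_l=0$ for $k\neq l$. So $a_Fc'a_F=\omega(c')a_F^2$ exactly. Taking $c'$ self-adjoint with $\|c-c'\|$ small, you get the operator inequality $a_Fca_F\geq(\omega(c')-\|c-c'\|)a_F^2$, whence $a_F\precsim a_F^2\precsim c\precsim b$. Orthogonality of the $a_k$ gives $[a_F]=m\alpha$ when $|F|=m$. Part (i) is the special case $c=v^*a_jv\in A_j$, $F=\{i\}$. No averaging, no blocks $z_p$, no Haagerup/Powers inequality, and no vague ``$\bigoplus$-type lower bounds'' are needed. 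This is exactly the paper's argument.

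Even setting aside that it is unproved, your estimate as formulated would not deliver the comparison. Because $a_ka_l=0$, you have $\|z\|^2=1/n$ and $\|\tfrac1na_F^2\|=1/n$, so an error term that merely tends to $0$ says nothing. Moreover, norm approximations only give cut-downs $(\tfrac1na_F^2-\varepsilon)_+\precsim c$, not $a_F\precsim c$. The same defect affects your step $\bigoplus_p z_p^*cz_p\precsim c$ when the cross terms are only approximately zero. The exact vanishing, which yields an operator inequality rather than a norm estimate, is the missing idea.

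Your deductions of (iii) and (iv) from (ii) are fine, and more explicit than the paper's appeal to ``standard Cuntz semigroup analysis''.
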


\begin{proof}
    On the free product Hilbert space
    \[H:=\mathbb{C}\xi\oplus\bigoplus_{k=1}^\infty\bigoplus_{j_1\neq\cdots\neq j_k}\overset{\circ}{H}_{j_1}\otimes\cdots\otimes\overset{\circ}{H}_{j_k},\]
    $a_j$ annihilates the cyclic vector $\xi$ and the subspace of centered words whose first letter does not come from $A_j$ and outputs centered words with first letter from $A_j$. As the free product representation is faithful, $a_ia_j=0$ whenever $i\neq j$.

    Fix $i,j\in J$. If $i=j$, $(i)$ is trivial, so assume $i\neq j$. Since $\varphi_j$ induces faithful GNS, fix $v\in A_j$ such that $t=\omega(v^*a_jv)=\varphi_j(v^*a_jv)>0$. We can express $v^*a_jv=t+c_0$, where $c_0$ is a centered element of $A_j$. Then $a_iv^*a_jva_i=a_i(t+c_0)a_i=ta_i^2$, so $a_i\precsim a_i^2\precsim a_j$.

    For $(ii)$, fix nonzero $b\in C_+$ and $m\in\mathbb{N}$. Again, as the representation of $\omega$ is faithful, fix $v\in C$ such that $t:=\omega(v^*bv)>0$, let $c:=v^*bv$, and note $c\precsim b$. Fix finite $G\subseteq J$ sufficiently large so that there is self-adjoint $d\in \ast_{i\in G}(A_i,\varphi_i)$ with $\|c-d\|<\frac{t}{3}$, so that $\omega(d)\geq\frac{2t}{3}$. We can express $d=\omega(d)+d_0$, where $d_0$ is a limit of a combination of words comprised of centered letters from the factors in $G$. Then for any $F\subseteq J\setminus G$ with $|F|=m$, 
    \[a_Fda_F=a_F(\omega(d)+d_0)a_F=\omega(d)a_F^2\]
    by the same reasoning as in the preceding paragraph. Then 
    \[a_Fca_F\geq a_Fda_F+a_F(c-d)a_F\geq \omega(d)a_F^2-\|c-d\|a_F^2\geq\frac{t}{3}a_F^2,\]
    so $a_F\precsim a_F^2\precsim c\precsim b.$ For any two distinct $i,j\in F$, $a_ia_j=0$, so $[a_F]=m\alpha$. By $(ii)$, if $K$ is a nontrivial ideal of $C$, $b\in K_+\setminus\{0\}$ has $\alpha\leq[b]$, so $I\subseteq K$, so $(iii)$ follows.

    Finally, for $(iv)$, since $a_j$ is full in $I$, for all nonzero $b\in I_+$, $[b]\leq \infty\alpha$. Combining this with $(ii)$, some standard Cuntz semigroup analysis grants $2[b]\leq[b]$, so $I$ is purely infinite, simple.
\end{proof}

This proof illuminates our mechanism for purely infinite simplicity. While nonfaithfulness tends to be a difficulty when proving purely infinite simplicity for finitely many factors, it actually becomes the vehicle when there are infinitely many factors, as we can use tails consisting of positive kernel elements to excise arbitrary positive elements, allowing for robust comparison under $\precsim$.

\begin{thm}\label{freedich2}
    Let $J$ be infinite. For each $j\in J$, let $(A_j,\varphi_j)$ be a $C^*$-probability space with nonfaithful $\varphi_j$ inducing faithful GNS. Define 
    \[I_j:=\mathrm{Ideal}_{A_j}(\ker(\varphi_j)_+)\text{ and }\delta_j:=\|\varphi_j|_{I_j}\|.\] 
    \begin{enumerate}[label=(\roman*)]
        \item Then $\ast_{j\in J}(A_j,\varphi_j)$ contains an essential and purely infinite, simple ideal.
        \item If $\sum_{j\in J}\delta_j>1$, then $\ast_{j\in J}(A_j,\varphi_j)$ is purely infinite, simple.
    \end{enumerate}
\end{thm}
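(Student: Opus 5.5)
Part (i) follows directly from Lemma \ref{tail}. By parts (iii) and (iv), $I=\mathrm{Ideal}_C(\alpha)$ is the minimal essential ideal of $C=\ast_{j\in J}(A_j,\varphi_j)$, and it is purely infinite and simple. (Essentiality of $I$ also follows from Lemma \ref{tail}(ii): a nonzero ideal $K$ with $K\cap I=0$ would contain some nonzero $b\in K_+$ with $\alpha\le[b]$, which forces $I\subseteq K$.) So the real content of the theorem is part (ii). There we must show $I=C$, that is, $1\in I$, or equivalently that the quotient $C/I$ is zero. Once $I=C$, part (iv) of the lemma gives that $C$ is purely infinite and simple.

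To prove $1\in I$, I would first find many elements of $I$ coming from the factors. For each $j$, the ideal $I_j=\mathrm{Ideal}_{A_j}(\ker(\varphi_j)_+)$ is generated by positive elements of $\ker\varphi_j$. Each such element $x$ satisfies $x\precsim a_j$ in the sense of Lemma \ref{tail}(i); this comes from running the excision argument of part (i) with $x$ in place of $a_i$, since that argument only used $\varphi_i(a_i)=0$. Hence $I_j\subseteq I$ for every $j$, and so the closed ideal generated by $\bigcup_j I_j$ sits inside $I$. Now pass to the quotient $\pi:C\to C/I$. On each $\pi(A_j)$ the ideal $I_j$ dies, so $\pi|_{A_j}$ factors through $A_j/I_j$. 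By definition of $\delta_j$, the state $\varphi_j$ restricted to $I_j$ has norm $\delta_j$. Choose an approximate unit $e_j$ of $I_j$ with $\varphi_j(e_j)\to\delta_j$. Then $1-e_j$ is a positive contraction that maps to $1$ in $C/I$, and $\varphi_j(1-e_j)\approx 1-\delta_j$.

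The key step is to use freeness to turn the condition $\sum\delta_j>1$ into a contradiction unless $C/I=0$. Pick a finite $F\subseteq J$ with $\sum_{j\in F}\delta_j>1$ and set $p_j=1-e_j\in A_j$. The elements $p_j$ are free in $(C,\omega)$, each with $\omega(p_j)\approx 1-\delta_j$, and each satisfies $\pi(p_j)=1$. Hence the product $y=p_{j_1}p_{j_2}\cdots p_{j_k}$ over $F$ also has $\pi(y)=1$, so $1-y\in I$, and by symmetry $1-y^*y\in I$. I would then use a Dykema-type estimate for free positive contractions to show that $\|y\|<1$, or at least $\|y^*y\|<1$, when $\sum(1-\omega(p_j))>1$. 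The model case is free projections $q_j$ with traces $\tau_j$ satisfying $\sum(1-\tau_j)>1$: their meet is zero and the norm of their product is strictly less than $1$. For the states here, which are not tracial, I would carry out the estimate inside the free product Hilbert space $H$. The idea is to show that the vectors $p_{j_k}\cdots$ acting near $\xi$ lose a fixed fraction of their mass, measured by $\sum\delta_j-1$, and to get a genuine norm bound by iterating over long alternating words drawn from disjoint copies of $F$, using that $J$ is infinite. Once $\|y^*y\|<1$, the element $1-y^*y$ is invertible and lies in $I$, so $I=C$.

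The main obstacle is this norm estimate. Non-tracial, nonfaithful states make the usual free-projection computations unavailable, so the delicate part is controlling $\|p_{j_1}\cdots p_{j_k}\|$ directly on $H$. As a fallback, I would try to show that the unital quotient $C/I$ carries a state $\rho$ which agrees with a state on each $A_j/I_j$. Such a $\rho$ would have $\rho(\pi(e_j))=0$, and it should be comparable to $\omega$ via the $p_j$. Freeness would then force $\sum_j\delta_j\le 1$, giving the contradiction.
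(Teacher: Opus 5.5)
Your treatment of (i) is fine. So is the inclusion $I_j\subseteq I$: your direct excision works, since a positive $x\in\ker\varphi_j$ satisfies $x\xi_j=0$, hence $xc_0x=0$ for centered $c_0$ from another factor. The reduction of (ii) to finding an invertible element of $I$ is also correct. Your target $\|p_{j_1}\cdots p_{j_k}\|<1$ is a legitimate one. It is in fact equivalent to $\sum_{j\in F}e_j$ being bounded below: for positive contractions, $\|p_{j_1}\cdots p_{j_k}\eta_n\|\to 1$ with $\|\eta_n\|=1$ forces $(1-p_j)\eta_n=e_j\eta_n\to 0$ for every $j\in F$.

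The gap is that this estimate is the whole content of (ii), and you do not prove it. The ``Dykema-type estimate'', the mass-loss heuristic and the fallback state $\rho$ on $C/I$ are stated without a mechanism. The iteration over disjoint copies of $F$ is not available in general, for instance when $\sum_j\delta_j<\infty$, which can happen since the $\delta_j$ can be arbitrarily small. As written, (ii) is not established.

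The missing idea is a short computation in $H\cong H_j\otimes H(j)$. Here $H(j)$ is spanned by $\xi$ and the reduced words not beginning with a letter from $A_j$, and $E_j=P_{\mathbb{C}\xi_j}\otimes 1$ is the projection onto $H(j)$. Take $e_j$ far enough along the approximate unit of $I_j$ that $\sum_{j\in F}\varphi_j(e_j)>1$. Since $A_j$ acts as $A_j\otimes 1$, we have $\|E_j(1-e_j)\|=\|(1-e_j)\xi_j\|$. So for any unit vector $\eta$,
\[\|E_j\eta\|\le\|(1-e_j)\xi_j\|+\|e_j\eta\|,\qquad \|(1-e_j)\xi_j\|^2=\varphi_j((1-e_j)^2)\le 1-\varphi_j(e_j).\]
On the other hand, $1-E_j$ is the projection onto words beginning in $A_j$. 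These ranges are mutually orthogonal for distinct $j$, so $\sum_{j\in F}\|E_j\eta\|^2\ge |F|-1$.

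Suppose $\sum_{j\in F}e_j$ were not bounded below. Then there would be unit vectors $\eta_n$ with $e_j\eta_n\to 0$ for all $j\in F$. Combining the two estimates gives $|F|-1\le\sum_{j\in F}(1-\varphi_j(e_j))$, i.e. $\sum_{j\in F}\varphi_j(e_j)\le 1$, a contradiction. Hence $\sum_{j\in F}e_j\in I$ is invertible and $I=C$; by your equivalence, $\|y\|<1$ follows as well. No distributional free-probability estimate and no use of infinitely many factors is needed here; the infinitude of $J$ enters only through Lemma \ref{tail}.
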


\begin{proof}
    Let $\varphi_j$ be nonfaithful. It suffices to show that $I$ is full in $C=\ast_{j\in J}(A_j,\varphi_j)$. First, note by \ref{tail}$(iii)$ that $I$ is the unique minimal essential ideal of $C$, so is independent of the choice of $a_j$ in definition \ref{index}. Therefore, $\ker(\varphi_j)_+\subseteq I$ and in particular, $I_j\subseteq I$ for all $j\in J$.
    
    Choose finite $F\subseteq J$ such that $\sum_{j\in F}\delta_j>1$. Then, for each $j\in F$, there exists $e_j\in (I_j)_+$ such that $\|e_j\|=1$ and $\sum_{j\in F}\varphi_j(e_j)>1$. Suppose, towards contradiction, that $c:=\sum_{j\in F}e_j\not\geq \epsilon1_C$ for any $\epsilon>0$. Then there are unit vectors $\eta_n\in H$ such that $\langle c\eta_n,\eta_n\rangle\rightarrow 0$, so $\|e_j\eta_n\|^2=\langle e_j^2\eta_n,\eta_n\rangle\leq \langle e_j\eta_n,\eta_n\rangle\rightarrow 0$ for all $j\in F$. Let $q_j:=1_C-e_j$. 
    
    Consider the subspace
    \[H(j):=\mathbb{C}\xi\oplus\bigoplus_{k=1}^\infty\bigoplus_{j\neq j_1\neq\cdots\neq j_k}\overset{\circ}{H}_{j_1}\otimes\cdots\otimes\overset{\circ}{H}_{j_k}.\]
    There is a canonical isomorphism $H\cong H_j\otimes H(j)$ by $\xi_j\otimes \eta\mapsto \eta$ and $\zeta\otimes \eta\mapsto\zeta\otimes \eta$ for $\zeta\in \overset{\circ}{H_j}$, $\eta\in H(j)$, where $\xi_j$ is the cyclic vector associated to the $j$-th factor. Let $E_j$ be the projection of $H$ onto $H(j)$, given by $E_j:=P_{\mathbb{C}\xi_j}\otimes 1$. Hence $\|E_jq_j\|=\|P_{\mathbb{C}\xi_j}q_j\|=\|q_j\xi_j\|$, so
    \[\|E_j\eta_n\|\leq \|E_jq_j\eta_n\|+\|E_je_j\eta_n\|\leq\|q_j\xi_j\|+\|e_j\eta_n\|,\]
    which grants \[\limsup_{n\rightarrow\infty}\|E_j\eta_n\|^2\leq\|q_j\xi_j\|^2=\varphi_j((1-e_j)^2)\leq 1-\varphi_j(e_j).\]
    Fix an arbitrary unit vector $\eta\in H$ and express $\eta=\eta^0+\sum_{j\in F}\eta^j+\eta^c$, where $\eta^0\in\mathbb{C}\xi$, $\eta_j$ begins with first letter from the $j$-th factor, and no component of $\eta^c$ has first letter in the $F$-th factors. As $E_j$ only annihilates $\eta^j$, then $\|E_j\eta\|^2=1-\|\eta^j\|^2$, so
    \[\sum_{j\in F}\|E_j\eta\|^2=|F|-\sum_{j\in F}\|\eta^j\|^2\geq |F|-1.\]
    Then for $\eta=\eta_n$,
    \[|F|-1\leq \limsup_{n\rightarrow\infty}\sum_{j\in F}\|E_j\eta_n\|^2\leq\sum_{j\in F}\limsup_{n\rightarrow\infty}\|E_j\eta_n\|^2\leq\sum_{j\in F}(1-\varphi_j(e_j)),\]
    so $\sum_{j\in F}\varphi_j(e_j)\leq 1$, a contradiction. Hence $c\geq \epsilon 1_C$ for some $\epsilon>0$, so $1_C\precsim c$. Therefore $1_C\in I$, so by \ref{tail}$(iv)$, $C=I$ is purely infinite, simple.

\end{proof}

Theorem \ref{pureinf} follows by model theory.

\begin{thm}\label{pureinf2}
    Let $(A,\varphi)$ be a selfless $C^*$-probability space with $\varphi$ nonfaithful. Then $A$ is purely infinite, simple.
\end{thm}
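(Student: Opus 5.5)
Since the first factor embedding $(A,\varphi)\hookrightarrow(A,\varphi)*(A,\varphi)$ is existential, I would first iterate it to get an existential embedding of $(A,\varphi)$ into the infinite free product $(C,\omega):=\ast_{j\in\mathbb{N}}(A,\varphi)$ as the first factor. Existential embeddings compose, so $(A,\varphi)\hookrightarrow\ast_{j\le n}(A,\varphi)$ is existential for each $n$. The direct limit of existential embeddings is existential, since any $\bar y$ in $C_1$ is approximated by elements of finite free products and quantifier-free formulas are uniformly continuous. Hence $A\hookrightarrow C$ is existential.

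Next I would apply Theorem \ref{freedich2} to $C$. Each factor $(A,\varphi)$ has $\varphi$ nonfaithful and inducing faithful GNS, so the hypotheses hold. All $\delta_j=\delta:=\|\varphi|_{I_A}\|$ are equal, where $I_A=\mathrm{Ideal}_A(\ker(\varphi)_+)$. Because $\ker(\varphi)_+\neq 0$, the ideal $I_A$ is nonzero. Since the GNS representation is faithful, $\varphi$ does not vanish on $I_A$: if it did, $\varphi(x^*ax)=0$ for all $x$ and all $a\in I_A$, so $\pi_\varphi(a)=0$. Thus $\delta>0$, and $\sum_j\delta_j=\infty>1$. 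Theorem \ref{freedich2}(ii) then gives that $C$ is purely infinite, simple.

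Finally I would transfer pure infinite simplicity down along the existential embedding. Purely infinite simple (and $A\neq\mathbb{C}$) is equivalent to the following: for every nonzero $a\in A_+$ there is $x$ with $x^*ax=1$. In approximate form: for every $a\in A_+$ with $\|a\|=1$ and every $\epsilon>0$, there is $x$ of norm at most $K(\epsilon)$ with $\|x^*ax-1\|<\epsilon$.

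The obstacle is that this bound on $x$ is not uniform. In $C$, an element $a$ of norm one has some witness $x$ with $x^*ax\approx 1$, but its norm depends on $a$. Existentiality only transfers $\inf_y\Phi(a,y)$ for fixed $a$ with $y$ in a fixed ball. That is enough: for fixed $a\in A$, pick a witness $x\in C$ with $\|x^*ax-1\|<\epsilon$ and scale by $N=\lceil\|x\|\rceil$. The quantifier-free formula $\Phi(a,y)=\|N^2y^*ay-1\|$ has infimum less than $\epsilon$ over $C_1$, hence over $A_1$. So there is $y\in A$ with $\|(Ny)^*a(Ny)-1\|<\epsilon$. Taking $\epsilon<1$, the element $(Ny)^*a(Ny)$ is invertible, so $1\precsim a$ in $A$.

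Thus every nonzero positive element of $A$ is full and properly infinite: it dominates $1$, and $1$ is properly infinite, since $1\oplus1\precsim 1$ transfers similarly from $C$ using isometries $s_1,s_2$ with $s_1^*s_2=0$, a quantifier-free condition on a pair in the unit ball. Therefore $A$ is simple and purely infinite. The main care point is checking that the direct-limit existentiality argument goes through.
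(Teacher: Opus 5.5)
Your overall route is the paper's: embed $(A,\varphi)$ existentially as the first factor of $\ast_{j=1}^\infty(A,\varphi)$, apply Theorem~\ref{freedich2}(ii), and pull pure infinite simplicity back along the embedding. Your argument that $\delta>0$ is correct: if $\varphi$ vanished on the nonzero ideal $I_A$, then $\pi_\varphi(I_A)=0$. This fills in a point the paper only asserts.

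The one step that does not go through as written is the first. ``Existential embeddings compose'' does not by itself show that $(A,\varphi)\hookrightarrow\ast_{j\le n}(A,\varphi)$ is existential. Selflessness only gives you the first-factor map $A\hookrightarrow A*A$. To compose, you would need the intermediate inclusions $\ast_{j\le n}A\hookrightarrow\ast_{j\le n+1}A$ to be existential, and that does not follow formally from anything you have stated; it is a genuine free-product permanence fact. For instance, if $B:=\ast_{j\le n}(A,\varphi)$ were known to be selfless, then the chain $B\subseteq B*A\subseteq B*B$ would squeeze the relevant infima and give it. But that selflessness itself needs proof. The paper gets existentiality of $A\hookrightarrow\ast_{j=1}^\infty(A,\varphi)$ directly from \cite[2.6(vi)]{ROB}; citing that, or proving such a lemma, closes the gap. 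The direct-limit step you singled out as the main point of care is, by contrast, routine, and your sketch of it is fine.

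Your transfer step differs mildly from the paper's and is valid. You use Cuntz's characterization and apply existentiality to one fixed parameter $a$ at a time, absorbing the witness's norm into the scalar $N^2$ inside the formula. The paper instead uses the uniform axiom of \cite[3.13.7]{FAR}, whose witnesses already lie in the unit ball, so the non-uniformity you worried about does not actually arise. Your version has the advantage of not needing that axiomatization. The separate check that $1$ is properly infinite is redundant once you have Cuntz's characterization and $A\neq\mathbb{C}$.
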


\begin{proof}
    Assume $(A,\varphi)$ is selfless with $\varphi$ nonfaithful. As $\delta_j>0$ is constant, $\sum_{j=1}^\infty\delta_j=\infty>1$, so by theorem \ref{freedich}, $(C,\omega):=\ast_{j=1}^\infty(A,\varphi)$ is purely infinite, simple. As noted in \cite[3.13.7]{FAR}, purely infinite, simple $C^*$-algebras are axiomatized by
    \[\sup_{x\in (C_+)_1}\sup_{y\in (C_+)_1}\min\{\|x\|\dotminus\frac{1}{2},\inf_{z\in C_1}\|2zxz^*-y\|\}=0,\]
    where $s\dotminus t:=\max\{s-t,0\}$. By \cite[2.6(vi)]{ROB}, the first factor embedding $\theta:(A,\varphi)\hookrightarrow(C,\omega)$ is existential. Fix $a,b\in (A_+)_1$. Then
    \[\inf_{z\in A_1}\min\{\|a\|\dotminus\frac{1}{2},\|2zaz^*-b\|\}=\inf_{z\in C_1}\min\{\|\theta(a)\|\dotminus\frac{1}{2},\|2z\theta(a)z^*-\theta(b)\|\}=0.\]
    As $a,b\in (A_+)_1$ were arbitrary, $A$ is purely infinite, simple.
\end{proof}

Note that at the beginning of the proof of theorem \ref{pureinf}, we use the corollary of theorem \ref{freedich} that for nonfaithful $\varphi$, $\ast_{j=1}^\infty(A,\varphi)$ is purely infinite, simple. However, we refrain from stating this corollary on its own, as theorem \ref{selfdich} provides an immediate self-strengthening.

\begin{cor}
    Let $(A,\varphi)$ be a $C^*$-probability space with $\varphi$ nontracial, inducing faithful GNS. Then $\ast_{j=1}^\infty(A,\varphi)$ is purely infinite, simple.
\end{cor}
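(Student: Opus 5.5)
The idea is to deduce this from Theorem \ref{selfdich}, applied to $(C,\omega):=\ast_{j=1}^\infty(A,\varphi)$. Two things are needed: that $(C,\omega)$ is selfless, and that $\omega$ is nontracial. Then part (ii) of Theorem \ref{selfdich} gives pure infiniteness, and simplicity is part of the same theorem.

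For selflessness, I would use that an infinite reduced free power of a nontrivial $C^*$-probability space whose state induces faithful GNS is selfless. The GNS representation of $\omega$ is faithful, since it is the free product representation and each factor's GNS is faithful. Also $C\neq\mathbb{C}$, because $A$ is not $\mathbb{C}$ ($\varphi$ being nontracial). For existentiality of the first factor embedding $(C,\omega)\hookrightarrow(C,\omega)*(C,\omega)$, the standard argument goes as follows.
\begin{itemize}
\item Identify $(C,\omega)*(C,\omega)\cong\ast_{j\in\mathbb{N}\sqcup\mathbb{N}}(A,\varphi)$.
\item Given finitely many elements of $C$, approximate them by elements supported on finitely many indices $\{1,\dots,N\}$.
\item Choose a state-preserving embedding of $(C,\omega)*(C,\omega)$ back into $C$. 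It fixes the first $N$ factors and sends the remaining, countably many, factors injectively onto the indices $>N$.
\end{itemize}
This exhibits $C$ as ``approximately retracting'' onto itself over the free product. Equivalently, it gives an embedding into $C^\U$ which restricts to the diagonal on $C$, and this is exactly existentiality. I expect this is essentially \cite[2.6]{ROB}, or a direct consequence of it, and I would cite it rather than reprove it. This is the main point requiring care: the approximation argument must hold uniformly for quantifier-free formulas, and the reindexing embeddings must be trace-free isomorphisms of reduced free products, which holds by uniqueness of reduced free products.

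Nontraciality of $\omega$ is immediate. The restriction of $\omega$ to the first copy of $A$ is $\varphi$, which is nontracial, so some $x,y\in A$ have $\omega(xy)\neq\omega(yx)$.

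Applying Theorem \ref{selfdich}(ii) to the selfless space $(C,\omega)$ with nontracial $\omega$ shows that $C$ is purely infinite and simple. If $\varphi$ happens to be nonfaithful, this also follows directly from Theorem \ref{freedich}, as in the proof of Theorem \ref{pureinf2}. The remaining case, with $\varphi$ faithful, is covered by the faithful nontracial half of the dichotomy, \cite[3.1]{ROB}. So the only genuinely new input is the selflessness of infinite free powers.
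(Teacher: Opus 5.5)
Your proposal is correct and follows the paper's proof: show $\ast_{j=1}^\infty(A,\varphi)$ is selfless by Robert's result (the paper cites \cite[2.2]{ROB}, and your reindexing argument is a valid sketch of it), note that $\omega$ is nontracial because it restricts to $\varphi$, and apply Theorem \ref{selfdich}(ii). Your final split into faithful and nonfaithful $\varphi$ is unnecessary, since Theorem \ref{selfdich}(ii) covers both cases at once; the faithful branch would also silently need that $\omega$ is faithful whenever $\varphi$ is.
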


\begin{proof}
    Apply \cite[2.2]{ROB} to see that $\ast_{j=1}^\infty(A,\varphi)$ is selfless and theorem \ref{selfdich} to see it is purely infinite.
\end{proof}

\section{Permanence Properties}

Keeping in the theme of cleaning up existing results, we strengthen a few existing permanence properties of selfless $C^*$-probability spaces by removing assumptions. In the next proposition, we remove the assumption that $B$ be separable in \cite[4.2]{ROB}.

\begin{prop}\label{selfabs}
    Let $(A,\varphi)$ be a selfless $C^*$-probability space, $(B,\psi)$ inducing faithful GNS. Then $(A,\varphi)*(B,\psi)$ is selfless.
\end{prop}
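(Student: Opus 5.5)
We want to show that $(A,\varphi)*(B,\psi)$ is selfless when $(A,\varphi)$ is selfless and $\psi$ induces faithful GNS. Robert's argument in \cite[4.2]{ROB} assumed $B$ separable, so the plan is to remove that assumption by a downward Löwenheim--Skolem style reduction to separable subalgebras.

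First we dispose of the easy conditions in Definition \ref{self}. The reduced free product of states inducing faithful GNS again induces faithful GNS, and $A\neq\mathbb{C}$ forces $(A,\varphi)*(B,\psi)\neq\mathbb{C}$. What remains is to show that the first factor embedding
\[
(A,\varphi)*(B,\psi)\hookrightarrow \big((A,\varphi)*(B,\psi)\big)*\big((A,\varphi)*(B,\psi)\big)
\]
is existential.

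\textbf{Reduction.} Existentiality concerns a single quantifier-free formula $\Phi(\bar x,\bar y)$ with finitely many parameters $\bar c$ from $A*B$. These lie in $A*B_0$ for some separable unital $C^*$-subalgebra $B_0\subseteq B$, and the witnesses $\bar y$ used on the right-hand side can be taken in $(A*B_1)*(A*B_1)$ for some separable $B_1\supseteq B_0$. Restricting $\psi$ to $B_1$ still induces faithful GNS, since the GNS representation of $\psi|_{B_1}$ is the restriction of that of $\psi$ to the cyclic subspace $\overline{B_1\xi}$, and faithfulness passes to subalgebras in this setting. Moreover, $A*B_1\subseteq A*B$ is a state-preserving inclusion of reduced free products, and it is compatible with the doubled free products. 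Hence the infimum in $A*B$ is at most the infimum computed in $A*B_1$.

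\textbf{Separable case.} For the separable algebra $B_1$, apply \cite[4.2]{ROB} to conclude that $(A,\varphi)*(B_1,\psi|_{B_1})$ is selfless. This gives
\[
\inf_{\bar y\in (A*B_1)_1}\Phi(\bar c,\bar y)=\inf_{\bar y\in ((A*B_1)*(A*B_1))_1}\Phi(\bar c,\bar y).
\]
The right-hand side equals the infimum over $(A*B)*(A*B)$ up to the chosen $\epsilon$, by the choice of $B_1$. The left-hand side dominates the infimum over $A*B$, and the reverse inequality is trivial because $A*B$ embeds into the double free product. Combining these yields equality, so the first factor embedding is existential.

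\textbf{Main obstacle.} The delicate step is the claim that the GNS of $\psi|_{B_1}$ is faithful. A faithful GNS representation of $\psi$ on $B$ need not obviously restrict to a faithful GNS representation for $\psi|_{B_1}$. For $b\in B_1$ with $\psi(x^*b^*bx)=0$ for all $x\in B_1$, we only know this for $x\in B_1$, not for all $x\in B$. The fix is to choose $B_1$ by a closing-off argument: for each $b$ in a countable dense subset and each rational $\epsilon$, add an element $x\in B$ nearly witnessing $\sup_x\psi(x^*b^*bx)$ relative to $\|b\|$, and iterate countably many times. Here we use that faithful GNS means $\|b\|_{\mathrm{GNS}}=\sup_{\|x\|\le1}\psi(x^*b^*bx)^{1/2}=\|b\|$. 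This condition is expressible as an equality, so it is preserved for the separable elementary-style closure.

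If this becomes awkward, an alternative route avoids the issue entirely: pass to an elementary separable substructure of $(B,\psi)$ in the language of $C^*$-probability spaces. Faithfulness of GNS is axiomatizable by the displayed sup-formula, and the free product construction commutes with such inclusions.
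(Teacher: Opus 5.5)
Your overall architecture is fine. Fixing a formula, shrinking to a separable $B_1$, applying \cite[4.2]{ROB}, and transferring back is just Robert's inductive-limit permanence \cite[4.1]{ROB} unwound. That is what the paper invokes after writing $(B,\psi)$ as a directed union of separable elementary submodels.

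The gap is the step you flagged: GNS-faithfulness of $\psi|_{B_1}$. You need it both to apply \cite[4.2]{ROB} and to identify the subalgebra generated by $A$ and $B_1$ with $(A,\varphi)*(B_1,\psi|_{B_1})$. Your first claim that it is automatic is false. Take $B=M_2$ and $\psi=\langle\,\cdot\,e_1,e_1\rangle$, whose GNS representation is the identity representation. For $B_1$ the diagonal, $\psi|_{B_1}$ is a character, so its GNS representation is not faithful.

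More seriously, both your closing-off argument and your ``alternative route'' rest on the identity
\[
\|\pi_\psi(b)\|=\sup_{\|x\|\le 1}\psi(x^*b^*bx)^{1/2},
\]
which is false. The GNS norm is the supremum over $x$ with $\psi(x^*x)\le 1$, not over the norm unit ball. For $B=C[0,1]$ with Lebesgue measure (faithful GNS) and $b(t)=t$, the right-hand side is $\|b\|_{L^2}=1/\sqrt{3}$, while $\|\pi_\psi(b)\|=\|b\|=1$. So the witnesses $x$ with $\|x\|\le 1$ and $\psi(x^*b^*bx)>\|b\|^2-\epsilon$ that you want to add need not exist. For the same reason, faithful GNS is not captured by your displayed condition.

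The repair is the paper's qualitative observation. For $D\in\{B_1,B\}$, put $N_D(b):=\sup_{y\in D,\ \|y\|\le 1}\|by\|_\psi$.
\begin{enumerate}[label=(\roman*)]
\item Since $B\xi$ is dense in $H_\psi$, $N_B(b)=0$ if and only if $\pi_\psi(b)=0$.
\item Trivially $\|\pi_{\psi|_{B_1}}(b)\|\ge N_{B_1}(b)$.
\item Suppose $B_1$ is elementary in $B$. Alternatively, close off so that $N_{B_1}=N_B$ on a dense subset of $B_1$; this suffices because both are $1$-Lipschitz. Then $\pi_{\psi|_{B_1}}(b)=0$ gives $N_B(b)=N_{B_1}(b)=0$, hence $\pi_\psi(b)=0$ and $b=0$.
\end{enumerate}
Equivalently, you can close off quantitatively using witnesses with $\psi(x^*x)\le 1$ in place of $\|x\|\le 1$. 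With this correction your argument goes through, and it becomes essentially the paper's proof with \cite[4.1]{ROB} inlined.
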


\begin{proof}
    If $B$ is separable, then \cite[4.2]{ROB} grants us the conclusion.
    Otherwise, express $(B,\psi)$ as the direct limit of its separable elementary submodels $(B_\lambda,\psi_\lambda)$, where $\psi_\lambda:=\psi|_{B_\lambda}$. 
    
    We claim that $\psi_\lambda$ is GNS-faithful. Let $\theta:(B_\lambda,\psi_\lambda)\hookrightarrow(B,\psi)$ be the elementary embedding. Fix $a\in B_\lambda$ and suppose $\pi_{\psi_\lambda}(a)=0$. Then
    \begin{align*}
        0=\|\pi_{\psi_\lambda}(a)\|\geq\sup_{y\in (B_\lambda)_1}\|ay\|_\psi=\sup_{y\in B_1}\|\theta(a)y\|_\psi.
    \end{align*}
    Thus, for all $y\in B,$ $\|\theta(a)y\|_\psi=0$, so $\|\pi_\psi(\theta(a))\|=0$. As $\pi_\psi$ is faithful, $\theta(a)=0$, so $a=0$. Since $a\in B_\lambda$ was arbitrary, $\pi_{\psi_\lambda}$ is faithful. Hence, $(A,\varphi)*(B_\lambda,\psi_\lambda)$ is selfless. Then, as $(A,\varphi)*(B,\psi)$ is the direct limit of $(A,\varphi)*(B_\lambda,\psi_\lambda)$, $(A,\varphi)*(B,\psi)$ is selfless by \cite[4.1]{ROB}.
\end{proof}

We waited to prove corollary \ref{selfpure} until this point so that we could remove the separability assumption on $B$. Regardless, it follows immediately by proposition \ref{selfabs} and theorem \ref{selfdich}. We can also remove the faithful assumption in \cite[4.3]{ROB}.

\begin{prop}
    Let $(A,\varphi)$ be a selfless $C^*$-probability space. Then for every $n\in\mathbb{N}$, $(M_n(A),\varphi\otimes\mathrm{tr}_n)$ is selfless. In particular, for every UHF $(B,\tau)$, where $\tau$ is the unique trace, $(A\otimes B,\varphi\otimes \tau)$ is selfless.
\end{prop}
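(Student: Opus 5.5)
We need to prove that if $(A,\varphi)$ is selfless then $(M_n(A),\varphi\otimes\mathrm{tr}_n)$ is selfless, removing the faithfulness assumption in \cite[4.3]{ROB}. The plan is to split into cases according to Theorem \ref{selfdich}.

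\emph{Faithful or tracial case.} If $\varphi$ is tracial, then $\varphi$ is faithful. Indeed, $A$ is simple by Theorem \ref{selfdich}, and the kernel of the GNS representation of a trace is exactly the ideal $\{a:\varphi(a^*a)=0\}$; this ideal is zero because the GNS representation is faithful. So whenever $\varphi$ is faithful, which includes every tracial $\varphi$, \cite[4.3]{ROB} applies directly.

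\emph{Nonfaithful case.} The remaining case is $\varphi$ nonfaithful, hence nontracial, so $A$ is purely infinite simple by Theorem \ref{pureinf2}. Here I would not redo Robert's argument. Instead I would use Ozawa's observation \cite[3]{OZA}, quoted in the introduction, that for a purely infinite simple selfless algebra any state witnesses selflessness.

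\begin{enumerate}[label=(\arabic*)]
    \item Show that $M_n(A)$ is selfless with respect to some state. One route is to pick a faithful state on $A$, if one exists, apply Ozawa to get selflessness of $(A,\varphi')$, and then apply \cite[4.3]{ROB} to get selflessness of $(M_n(A),\varphi'\otimes\mathrm{tr}_n)$. Faithful states need not exist when $A$ is nonseparable. In that case I would reduce to separable elementary submodels, as in Proposition \ref{selfabs}. Selflessness is an $\forall\exists$-type property preserved by elementary submodels (\cite[4.1]{ROB} for direct limits). Separable submodels carry faithful states, and $M_n$ commutes with the directed union.
    \item Note that $M_n(A)$ is again purely infinite simple and that $\varphi\otimes\mathrm{tr}_n$ is nontracial. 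Its GNS representation is faithful because it is nonzero on a simple algebra.
    \item Apply Ozawa's result again to transfer selflessness to the specific state $\varphi\otimes\mathrm{tr}_n$.
\end{enumerate}

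\emph{UHF statement.} Write $B$ as an inductive limit of matrix algebras $M_{n_k}$, with $\tau$ restricting to $\mathrm{tr}_{n_k}$. Then $(A\otimes B,\varphi\otimes\tau)$ is the direct limit of the spaces $(M_{n_k}(A),\varphi\otimes\mathrm{tr}_{n_k})$. Each of these is selfless by the first part, so the limit is selfless by \cite[4.1]{ROB}.

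\emph{Main obstacle.} The hardest step is justifying the state-independence claim precisely in the generality needed, namely for nonseparable, nonfaithful states. If Ozawa's statement requires faithfulness or separability, I would handle it by the elementary-submodel reduction of Proposition \ref{selfabs}. That reduction needs two checks: GNS-faithfulness passes to submodels, which was shown there, and selflessness passes back up the directed union via \cite[4.1]{ROB}.
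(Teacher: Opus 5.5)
Your proposal is correct and follows the paper's proof. The paper uses \cite[4.3]{ROB} when $\varphi$ is faithful; otherwise Theorem \ref{pureinf2} makes $A$, hence $M_n(A)$, purely infinite simple, Ozawa's state-independence \cite[3]{OZA} gives selflessness of $(M_n(A),\varphi\otimes\mathrm{tr}_n)$, and the UHF case follows by a direct limit via \cite[4.1]{ROB}.

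The only difference is your step (1). The paper invokes \cite[3]{OZA} in the form ``every state on a purely infinite simple $C^*$-algebra witnesses selflessness,'' so it needs no prior selfless state on $M_n(A)$ and no faithful-state or elementary-submodel detour. Your detour is still a valid safeguard if one only has the weaker form.
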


\begin{proof}
    If $\varphi$ is faithful, the conclusion follows by \cite[4.3]{ROB}. If $\varphi$ is nonfaithful, by theorem \ref{pureinf}, $A$ is purely infinite, simple, so $M_n(A)$ is as well. By \cite[3]{OZA}, $(M_n(A),\varphi\otimes\mathrm{tr}_n)$ is selfless. The last claim follows by a direct limit.
\end{proof}

\begin{prop}
    Let $(A,\varphi)$ be a selfless $C^*$-probability space and $p$ a nonzero projection in $A$ such that $\varphi(p)>0$. Then $(pAp,\frac{1}{\varphi(p)}\varphi)$ is selfless. If $\varphi(p)=0$, then for any state $\psi$, $(pAp,\psi)$ is selfless.
\end{prop}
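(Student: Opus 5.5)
Split into cases according to whether $\varphi$ is faithful, nontracial nonfaithful, or tracial, and reduce each to Theorem \ref{selfdich}, Ozawa's result \cite[3]{OZA} (a purely infinite simple $C^*$-algebra is selfless with respect to any state inducing faithful GNS), or Robert's corner result for the faithful case.

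\textbf{Case $\varphi(p)>0$.} If $\varphi$ is faithful, the statement should be the existing corner permanence result in \cite{ROB}, and I would cite it directly. If $\varphi$ is nonfaithful, Theorem \ref{pureinf} shows $A$ is purely infinite, simple. Hence $pAp$ is purely infinite, simple, being a nonzero hereditary subalgebra. Moreover $pAp\neq\mathbb{C}$: otherwise $p$ would be a minimal projection, which cannot exist in an infinite-dimensional purely infinite simple algebra. By \cite[3]{OZA}, $(pAp,\psi)$ is then selfless for every state $\psi$ inducing faithful GNS. It remains to check this for $\psi=\frac{1}{\varphi(p)}\varphi|_{pAp}$. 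Suppose $x\in pAp$ satisfies $\psi(y^*x^*xy)=0$ for all $y\in pAp$. Since $A$ is simple, for any $z\in A$ we can write $z$ through elements of the form $pwp$ and factors through $p$. More directly, the kernel of $\pi_\psi$ is an ideal of the simple algebra $pAp$. It is proper because $\psi(p)=1$, so it is zero and $\psi$ induces faithful GNS.

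\textbf{Case $\varphi(p)=0$.} Here $\varphi$ is nonfaithful, so again $A$ is purely infinite, simple by Theorem \ref{pureinf}. Thus $pAp$ is purely infinite, simple and not $\mathbb{C}$. For any state $\psi$, the GNS kernel is a proper ideal of the simple algebra $pAp$, hence zero. By \cite[3]{OZA}, $(pAp,\psi)$ is selfless.

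\textbf{Main obstacle.} The delicate point is the faithful case, and specifically whether a tracial $\varphi$ is covered by the cited corner result. If it is not, I would handle it by hand. Put $n$ copies of $p$ under $1$ via Murray--von Neumann equivalence, using stable rank one and strict comparison from Theorem \ref{selfdich}(i). Then realize $A$ as a corner of $M_n(pAp)$ and pull back the existential first factor embedding through the compression $x\mapsto pxp$. This compression intertwines the free product with the amalgamated compressed free product, using the free-product expression for corners of free products from Dykema's work. If this becomes too costly, I would instead cite Robert's corner proposition in full generality.
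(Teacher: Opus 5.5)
Your handling of the nonfaithful case and of the case $\varphi(p)=0$ is correct and follows the paper's argument. $A$ is purely infinite simple by Theorem~\ref{pureinf}, hence so is the hereditary subalgebra $pAp$. Every state on $pAp$ is GNS-faithful because the GNS kernel is a proper ideal of a simple algebra, and \cite[3]{OZA} applies. Your check that $pAp\neq\mathbb{C}$ is a detail the paper leaves implicit.

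The weak point is where you draw the case line. A GNS-faithful trace is faithful, so your faithful case is the tracial case together with the faithful nontracial case, and you dispose of all of it by citing Robert's corner result. The paper, however, invokes \cite[4.4]{ROB} only when $\varphi$ is tracial. You give no argument that the cited result covers a faithful nontracial state with an arbitrary projection $p$. As written, that subcase rests on an unverified citation.

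The repair is to split as the paper does, tracial versus nontracial. If $\varphi$ is nontracial, Theorem~\ref{selfdich}(ii) says $A$ is purely infinite simple. This comes from Robert's original dichotomy when $\varphi$ is faithful and from Theorem~\ref{pureinf} when it is not. Your Ozawa argument then handles both subcases verbatim. Only traces need \cite[4.4]{ROB}, which is exactly what it is cited for in the paper.

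So your \emph{main obstacle} is backwards: the tracial case is the one already covered, and the Morita/compression fallback is unnecessary. As sketched, that fallback also lacks a mechanism for moving existentiality of $A\hookrightarrow A*A$ down to $pAp\hookrightarrow pAp*pAp$. Exhibiting $A$ as a corner of $M_n(pAp)$ does not do this. Compressing a reduced free product by $p$ also does not simply yield the free product of the compressions.
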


\begin{proof}
    If $\varphi$ is tracial, then \cite[4.4]{ROB} yields the conclusion. If $\varphi$ is nontracial, then $A$ is purely infinite, simple by theorem \ref{selfdich}. In particular, $pAp$ is purely infinite, simple as well. Therefore, by \cite[3]{OZA}, $(pAp,\frac{1}{\varphi(p)}\varphi)$ is selfless.
\end{proof}

Unital $C^*$-algebras $A$ and $B$ are Morita equivalent if and only if there exists $n\in\mathbb{N}$ and a full projection $p\in M_n(A)$ such that $B\cong pM_n(A)p$. It is thus natural to say $(A,\varphi)$ and $(B,\psi)$ are Morita equivalent if $B\cong pM_n(A)p$ and $\psi=\frac{1}{\varphi\otimes\mathrm{tr}_n(p)}\varphi\otimes\mathrm{tr}_n$.

\begin{cor}
    Let $(A,\varphi)$ be a selfless $C^*$-probability space. If $(B,\psi)$ is Morita equivalant to $(A,\varphi)$, then $(B,\psi)$ is selfless.
\end{cor}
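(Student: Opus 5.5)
We split according to the dichotomy of Theorem \ref{selfdich}, first reducing to the two constructions already treated. Suppose $(B,\psi)$ is Morita equivalent to $(A,\varphi)$. By definition there are $n\in\mathbb{N}$ and a full projection $p\in M_n(A)$ with $B\cong pM_n(A)p$ and $\psi=\frac{1}{(\varphi\otimes\mathrm{tr}_n)(p)}\,\varphi\otimes\mathrm{tr}_n$ restricted to the corner. For $\psi$ to be a state this presupposes $(\varphi\otimes\mathrm{tr}_n)(p)>0$, which we take as part of the definition.

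The first step is to note that $(M_n(A),\varphi\otimes\mathrm{tr}_n)$ is selfless, by the proposition on matrix amplifications proved just above. The second step applies the corner proposition to the selfless space $(M_n(A),\varphi\otimes\mathrm{tr}_n)$ and the nonzero projection $p$ with $(\varphi\otimes\mathrm{tr}_n)(p)>0$. It gives that
\[\Bigl(pM_n(A)p,\tfrac{1}{(\varphi\otimes\mathrm{tr}_n)(p)}\varphi\otimes\mathrm{tr}_n\Bigr)\]
is selfless. The final step transports selflessness along the state-preserving isomorphism $pM_n(A)p\cong B$. Selflessness is defined via the existential embedding into the free product together with GNS-faithfulness, and both are invariant under isomorphisms of $C^*$-probability spaces. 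Hence $(B,\psi)$ is selfless.

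The main point to check is the corner proposition in the nontracial case, where its proof invokes \cite[3]{OZA} for the purely infinite simple algebra $pAp$. There one must confirm that the normalized state $\psi$ induces a faithful GNS representation. This follows from simplicity: $B\cong pM_n(A)p$ is simple as a corner of the simple algebra $M_n(A)$, and $\pi_\psi\neq 0$, so its kernel, a proper ideal, is zero. In the tracial case, $\varphi\otimes\mathrm{tr}_n$ is tracial and the argument of \cite[4.4]{ROB} applies.

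Fullness of $p$ is not actually needed for the argument; it only ensures that the terminology ``Morita equivalent'' is justified. If one prefers a symmetric statement, the same argument applied in reverse (using that $A$ is a corner of a matrix algebra over $B$) shows the relation preserves selflessness in both directions.
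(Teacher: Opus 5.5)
Your argument is correct and is how the corollary follows in the paper, which states it without proof as an immediate consequence of the two preceding propositions: amplify to $(M_n(A),\varphi\otimes\mathrm{tr}_n)$, cut down by $p$ via the corner proposition, and transport along the state-preserving isomorphism. Drop your closing symmetry remark, though, because it is false: $A$ is a corner of some $M_m(B)$ only as an algebra, and $\varphi$ need not be the normalized amplification of $\psi$. For example, the selfless space $(C^*_r(\mathbb{F}_2),\tau)$ is the normalized corner at $1\otimes e_{11}$ of $(C^*_r(\mathbb{F}_2)\otimes M_2,\tau\otimes\omega)$, where $\omega$ is a faithful nontracial state on $M_2$; yet the latter is not selfless, since by Theorem~\ref{selfdich} it would then be purely infinite despite carrying a faithful trace.
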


\section{Isomorphisms of Reduced Free Products}

One powerful application of theorem \ref{freedich} and corollary \ref{selfpure} is in supplying Kirchberg algebras satisfying the UCT.

\begin{thm}\label{UCT}
    For $j\in\mathbb{N}$, let $A_j\neq\mathbb{C}$ be a separable, nuclear $C^*$-algebra satisfying the UCT, $\varphi_j$ inducing faithful GNS, and $\varphi_j$ pure for $j\geq 2$. Then $C:=\ast_{j=1}^\infty(A_j,\varphi_j)$ is a Kirchberg $C^*$-algebra satisfying the UCT. If $\omega$ is a pure state on $C$, then $(C,\omega)\cong\ast_{j=1}^\infty(A_j,\varphi_j)$ if and only if $\varphi_1$ is also pure.
\end{thm}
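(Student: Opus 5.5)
The plan is to reduce the Kirchberg property to Theorem \ref{freedich} and Corollary \ref{selfpure}, get nuclearity and the UCT from the structure of free products with pure states, and obtain the isomorphism statement from Kirchberg--Phillips classification together with homogeneity of pure states.

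\textbf{Step 1: the tail $D:=\ast_{j\geq 2}(A_j,\varphi_j)$ is purely infinite, simple.} For $j\geq 2$, $\varphi_j$ is nonfaithful. Indeed, the left kernel of a pure state is a maximal left ideal. If $\varphi_j$ were faithful, this left kernel would be $0$, so $A_j$ would have no nontrivial left ideals and hence $A_j=\mathbb{C}$, contrary to assumption. Next I compute $\delta_j$. The GNS representation $\pi_j$ is irreducible and faithful, and $I_j\neq 0$, so $\pi_j|_{I_j}$ is nondegenerate. Taking an approximate unit $(u_\lambda)$ of $I_j$ gives
\[\|\varphi_j|_{I_j}\|=\lim_\lambda\langle\pi_j(u_\lambda)\xi_j,\xi_j\rangle=1,\]
so $\delta_j=1$ and $\sum_{j\geq2}\delta_j=\infty>1$. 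Theorem \ref{freedich}(ii) then shows that $D$ is purely infinite, simple.

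\textbf{Step 2: $C$ is purely infinite, simple.} Write $C\cong(A_1,\varphi_1)*(D,\omega_D)$ by associativity of reduced free products. By \cite[2.2]{ROB}, $(D,\omega_D)$ is selfless. The state $\omega_D$ is nontracial, since a purely infinite simple algebra has no tracial states. Corollary \ref{selfpure} now gives that $C$ is purely infinite, simple. Separability of $C$ is clear.

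\textbf{Step 3: nuclearity and the UCT (the main obstacle).} Reduced free products of nuclear algebras are usually not nuclear, so this step must use purity of the $\varphi_j$. I would write $C=\varinjlim C_n$ with $C_n:=\ast_{j\leq n}(A_j,\varphi_j)$; nuclearity and the UCT pass to inductive limits, so it suffices to treat each $C_n$. I would then induct on $n$ using Ivanov's analysis of a free product $(B,\psi)*(A,\varphi)$ with $\varphi$ pure and GNS-faithful. In that setting the free product is built from $B$, $A$ and compact operators via an extension, or is $KK$-equivalent to such a construction. Nuclearity and the UCT are preserved under extensions and $KK$-equivalence, which gives the claim for $C_n$. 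Verifying that this extension picture applies exactly under our hypotheses is where I expect the real work.

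\textbf{Step 4: the isomorphism statement.} By Steps 2--3, $C$ is a UCT Kirchberg algebra. By Kishimoto--Ozawa--Sakai, any two pure states on $C$ are conjugate by an automorphism, so $(C,\omega)\cong(C,\omega')$ for all pure $\omega,\omega'$. Hence the claim reduces to: the free product state $\omega_0$ is pure if and only if $\varphi_1$ is pure.

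For "only if", use the $\omega_0$-preserving conditional expectation $E\colon C\to A_1$. Then $\omega_0=\varphi_1\circ E$. If $\varphi_1=\tfrac12(\psi+\psi')$ with $\psi\neq\psi'$, then $\omega_0=\tfrac12(\psi\circ E+\psi'\circ E)$ is a nontrivial convex decomposition, so $\omega_0$ is not pure.

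For "if", I would show that the free product of irreducible GNS representations is irreducible on $H$. Take a positive functional $\rho\leq\omega_0$. Its restriction to each $A_j$ is dominated by the pure state $\varphi_j$ and is therefore a scalar multiple $c_j\varphi_j$. I would then argue that freeness forces the constants $c_j$ to agree and $\rho$ to agree with $c\,\omega_0$ on alternating centered words. This last computation is the second place I expect difficulty; the alternative is to cite existing results on purity of free product states.
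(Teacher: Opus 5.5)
\textbf{Step 3 is a genuine gap, and it carries half of the conclusion.} A Kirchberg algebra is nuclear by definition, but your plan never shows that $C_n=\ast_{j\le n}(A_j,\varphi_j)$ is nuclear or satisfies the UCT. The mechanism you sketch would not deliver this:
\begin{enumerate}[label=(\alph*)]
\item Nuclearity is not a $KK$-invariant: the cone over any separable non-nuclear algebra is $KK$-equivalent to $0$. So ``$KK$-equivalent to such a construction'' can at best yield the UCT.
\item An extension-by-compacts description is not available in the generality of the theorem. If $A_n$ is unital, simple and infinite-dimensional (e.g.\ $\mathcal{Z}$ or a UHF algebra), the irreducible representation $\pi_{\varphi_n}$ contains no nonzero compact operator. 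And if, say, $A_1=\mathcal{O}_\infty$, then each $C_n$ with $n\geq 2$ is already purely infinite and simple by Corollary \ref{selfpure}, so there is no ideal of compacts to split off.
\end{enumerate}

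What is missing is a pair of specific theorems, which is exactly how the paper argues. For nuclearity it uses Ozawa's theorem \cite[1.1]{OZANUC}, in the form the paper applies: a reduced free product of nuclear algebras with GNS-faithful states is nuclear when one of the states is pure. For the UCT it combines Hasegawa's $KK$-equivalence between reduced and full free products \cite[1.1]{HAS} with Thomsen's result \cite[2.7]{THO} for full free products of UCT algebras. One then passes to the inductive limit. Given Ozawa's theorem, your induction $C_n=(C_{n-1},\omega_{n-1})*(A_n,\varphi_n)$ with $\varphi_n$ pure works immediately. The paper instead writes $C_n=(A_1,\varphi_1)*\ast_{j=2}^n(A_j,\varphi_j)$ and uses Voiculescu's theorem \cite[1.6.5]{VOI} that $\ast_{j=2}^n\varphi_j$ is pure.

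\textbf{Steps 1, 2 and 4 need only small adjustments.}
\begin{enumerate}[label=(\roman*)]
\item In Step 1, your computation of $\delta_j=1$ is correct and cleaner than the paper's route through $\|x+L_\varphi\|=\|x\|_\varphi$. You use nondegeneracy of $\pi_j|_{I_j}$ for the irreducible faithful $\pi_j$, then evaluate on an approximate unit.
\item In Step 2, it is not clear that \cite[2.2]{ROB} covers a tail whose factors vary with $j$; the paper only invokes it for infinitely many copies of one space. You do not need it. Since $D$ is already purely infinite and simple, $(D,\omega_D)$ is selfless by \cite[3]{OZA}, which is what the last clause of Corollary \ref{selfpure} relies on.
\item In Step 4, your conditional-expectation argument for the ``only if'' direction is a nice self-contained replacement for a citation. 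For the ``if'' direction, the constants $c_j$ all equal $\rho(1)$ automatically. The real content is that $\rho-\rho(1)\omega_0$ vanishes on alternating centered words, i.e.\ that the free product representation is irreducible. That is exactly Voiculescu's theorem \cite[1.6.5]{VOI}, which you should cite rather than reprove.
\end{enumerate}
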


\begin{proof}
    First, we note that by Ozawa \cite[1.1]{OZANUC}, 
    \[\ast_{j=1}^n(A_j,\varphi_j)=(A_1,\varphi_1)*\ast_{j=2}^n(A_j,\varphi_j)\] is nuclear, since $\ast_{j=2}^n\varphi_j$ is pure by \cite[1.6.5]{VOI}. Moreover, by \cite[1.1]{HAS} and \cite[2.7]{THO}, $\ast_{j=1}^n(A_j,\varphi_j)$ satisfies the UCT. Then, separable nuclear $C^*$-algebras satisfying the UCT are closed under countable direct limits \cite[22.3.4]{BLA}, so it suffices to show that $C$ is purely infinite, simple.
    
    We claim that, for $j\geq 2$, $(A_j,\varphi_j)$ has $\delta_j=1$. In particular, we show that $(A,\varphi):=(A_j,\varphi_j)$ has $\|x\|\leq\sup_{y\in A_1}\|xy\|_\varphi$. Let $\pi_\varphi:A \rightarrow\mathscr{B}(H_\varphi)$ be the GNS-representation. By \cite[III.2.16]{TAKE}, since $\varphi$ is pure, we have $\|x+L_\varphi\|=\|x\|_\varphi$. Hence, for each $\eta\in H_\varphi$ with $\|\eta\| < 1$, we can find $y\in A_1$ s.t. $\eta=y+L_\varphi$. Since these vectors are dense in $(H_\varphi)_1:=\{\eta\in H_\varphi:\|\eta\|\leq1\}$, we obtain \[\|x\|=\|\pi_\varphi(x)\|=\sup_{\eta\in (H_\varphi)_1}\|\pi_\varphi(x)\eta\|_\varphi\leq\sup_{y\in A_1}\|xy\|_\varphi\]
    by the faithfulness of $\pi_{\varphi}$. Thus $\delta_j\geq\sup_{v\in (A_j)_1}\varphi_j(v^*a_jv)=1$.
    
    Since pure states are nonfaithful, by theorem \ref{freedich}, $\ast_{j=2}^\infty(A_j,\varphi_j)$ is purely infinite, simple, so by corollary \ref{selfpure}, C is a Kirchberg algebra satisfying the UCT. The last claim follows by \cite[1.6.5]{VOI} and the homogeneity of the pure state space \cite{KIS}.
\end{proof}

\begin{cor}\label{UCT2}
    Let $A,B\neq\mathbb{C}$ be separable, nuclear, satisfying the UCT, $\varphi,\psi$ inducing faithful GNS. If $(A,\varphi)$ is selfless and $\varphi$ or $\psi$ is pure, then $C:=(A,\varphi)*(B,\psi)$ is a Kirchberg $C^*$-algebra satisfying the UCT.
    If $\omega$ is a pure state on $C$, then $(C,\omega)\cong(A,\varphi)*(B,\psi)$ if and only if $\varphi$ and $\psi$ are pure.
\end{cor}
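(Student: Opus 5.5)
The plan is to reduce Corollary \ref{UCT2} to the two-factor situation already handled by Corollary \ref{selfpure}, and then copy the structure of the proof of Theorem \ref{UCT}.

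\emph{Step 1: nuclearity.} Since $\varphi$ or $\psi$ is pure, Ozawa's result \cite[1.1]{OZANUC} shows that $C=(A,\varphi)*(B,\psi)$ is nuclear. That result requires only that one of the two states be pure and both GNS representations be faithful.

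\emph{Step 2: UCT and separability.} By \cite[1.1]{HAS} and \cite[2.7]{THO}, $C$ satisfies the UCT, exactly as in Theorem \ref{UCT}. Separability of $C$ is clear, since $A$ and $B$ are separable.

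\emph{Step 3: pure infiniteness.} I would invoke Corollary \ref{selfpure}. The factor $(A,\varphi)$ is selfless, and $(B,\psi)$ is nontrivial (because $B\neq\mathbb{C}$) and induces a faithful GNS representation. What remains is to check that $\varphi$ or $\psi$ is nontracial.
\begin{itemize}
\item A pure state that is tracial is a character. Its GNS representation is one-dimensional, so faithfulness of the GNS representation forces the algebra to equal $\mathbb{C}$.
\item Both $A$ and $B$ differ from $\mathbb{C}$, so whichever of $\varphi,\psi$ is pure is nontracial.
\end{itemize}
Corollary \ref{selfpure} then gives that $C$ is purely infinite and simple. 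Combined with Steps 1 and 2, $C$ is a Kirchberg algebra satisfying the UCT.

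\emph{Step 4: the isomorphism claim.}
\begin{itemize}
\item If both $\varphi$ and $\psi$ are pure, then the free product state $\varphi*\psi$ is pure by \cite[1.6.5]{VOI}. Kishimoto--Ozawa--Sakai homogeneity of the pure state space \cite{KIS} then provides an automorphism $\alpha$ of the separable simple algebra $C$ with $\omega=(\varphi*\psi)\circ\alpha$. This yields $(C,\omega)\cong(A,\varphi)*(B,\psi)$.
\item Conversely, suppose $(C,\omega)\cong(A,\varphi)*(B,\psi)$. Then $\varphi*\psi$ is pure. Restricting the free product state to a factor, purity of $\varphi*\psi$ forces purity of $\varphi$ and $\psi$. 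This is the converse direction of \cite[1.6.5]{VOI}, used the same way as in Theorem \ref{UCT}.
\end{itemize}

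The main obstacle is this converse in Step 4. If \cite[1.6.5]{VOI} only provides one direction, I would argue directly. Suppose $\varphi$ is not pure. Write $\varphi=\frac12(\varphi_1+\varphi_2)$ with $\varphi_1\neq\varphi_2$. Then $\frac12(\varphi_1*\psi)+\frac12(\varphi_2*\psi)$ is a decomposition of $\varphi*\psi$. To make this rigorous I would use the conditional expectation onto $A$ and the GNS picture to check that the decomposition is valid and nontrivial.
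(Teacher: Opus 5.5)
Your proposal follows the paper's proof, which is just ``apply Corollary \ref{selfpure} and the reasoning in the proof of Theorem \ref{UCT}'': Ozawa for nuclearity, Hasegawa and Thomsen for the UCT, and Voiculescu plus Kishimoto--Ozawa--Sakai for the isomorphism statement. Your argument that a pure, GNS-faithful state on an algebra $\neq\mathbb{C}$ cannot be tracial correctly fills in the nontraciality hypothesis of Corollary \ref{selfpure}, which the paper leaves implicit.

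\textbf{The fallback for the converse is wrong as written.} It fails: $\varphi*\psi\neq\frac12(\varphi_1*\psi)+\frac12(\varphi_2*\psi)$ in general, because the free product state is not affine in $\varphi$. For $\psi$-centered $b$ one has $(\varphi*\psi)(a_1ba_2b^*)=\varphi(a_1)\varphi(a_2)\psi(bb^*)$, which is quadratic in $\varphi$. Your one-line justification (``restricting to a factor'') is also not a valid principle by itself, since restrictions of pure states to subalgebras are usually not pure.

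\textbf{The fix.} Let $V$ be the inclusion of $H_\varphi\cong\mathbb{C}\xi\oplus\overset{\circ}{H}_\varphi$ into the free product space $H$, and set $E_A(x):=V^*xV$.
\begin{enumerate}[label=(\roman*)]
\item A reduced word of centered letters containing a letter from $B$ only creates letters, so it maps $H_\varphi$ into a space orthogonal to $H_\varphi$. Hence $E_A$ kills such words.
\item Therefore $E_A$ is a ucp conditional expectation $C\to A$.
\item Since $V\xi_\varphi=\xi$, it satisfies $\varphi\circ E_A=\varphi*\psi$.
\end{enumerate}
If $\varphi=\frac12(\varphi_1+\varphi_2)$ with $\varphi_1\neq\varphi_2$, then
\[\varphi*\psi=\tfrac12(\varphi_1\circ E_A)+\tfrac12(\varphi_2\circ E_A).\]
This is a genuine convex decomposition into states that already differ on $A$, so $\varphi*\psi$ is not pure. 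The same argument with $E_B$ handles $\psi$. This gives the converse directly, without relying on the citation.
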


\begin{proof}
    Apply corollary \ref{selfpure} and the reasoning in the proof of theorem \ref{UCT}.
\end{proof}

The conclusions of \ref{UCT} and \ref{UCT2} grant many new isomorphisms. Indeed, by the Kirchberg-Phillips classification theorem \cite[8.2.1]{ROR}, one need only compute unital K-theory to determine isomorphism of these reduced free products. To compute the K-theory, use \cite[1.1]{HAS} and \cite[6.4]{THO}. Some examples include $\mathcal{O}_\infty\cong(\mathcal{Z},\tau)*(\mathcal{Z},\varphi)$ and
$(\mathcal{O}_\infty,\omega)\cong\ast_{j=1}^\infty(\mathcal{Z},\varphi)$ for any pure states $\varphi,\omega$ and $\tau$ the unique trace. It also partially resolves multiple questions in section 4 of \cite{HIR}. Indeed, let $A$ be separable, nuclear, satisfying the UCT, $\varphi$ a pure state on $A$, $\psi$ a pure state on $\mathcal{O}_\infty$, and $\lambda$ the Lebesgue measure on $[0,1]$. Then we have
\[(\mathcal{O}_\infty,\psi)\ast\ast_{j=1}^\infty(A,\varphi)\cong\ast_{j=1}^\infty(A,\varphi)\cong (C([0,1]),\lambda)\ast\ast_{j=1}^\infty(A,\varphi)\]
where the first isomorphism is state preserving, but the latter necessarily is not. In fact, this is stronger as we need not assume $A$ is purely infinite, simple.

\end{document}